\DeclareMathOperator{\Id}{Id}
\definecolor{white}{rgb}{1,1,1}\long\def\symbolfootnote[#1]#2{\begingroup\def\thefootnote{\fnsymbol{footnote}}\footnote[#1]{#2}\endgroup} 
\newcounter{thecounter}
\numberwithin{thecounter}{section}
\newtheorem{lemma}[thecounter]{Lemma}
\newtheorem{proposition}[thecounter]{Proposition}
\newtheorem{theorem}[thecounter]{Theorem}
\newtheorem{corollary}[thecounter]{Corollary}
\newtheorem{definition}[thecounter]{Definition}
\newtheorem{rem}[thecounter]{Remark}
\newtheorem{prop}[thecounter]{Proposition}
\newcommand{\Z}{{\mathbb{Z}}}
\newcommand{\Mg}{{\mathfrak{g}}}
\begin{document}

\title{Weyl group orbits on Kac--Moody  root systems}
\author{Lisa Carbone}
\author{Alexander Conway}
\author{Walter Freyn}
\author{Diego Penta}

\address{Department of Mathematics, Hill Center, Busch Campus\\
Rutgers, The State University of New Jersey\\
110 Frelinghuysen Rd\\
Piscataway, NJ 08854-8019}
\email{carbonel@math.rutgers.edu}

\address{Department of Mathematics, Hill Center, Busch Campus\\
Rutgers, The State University of New Jersey\\
110 Frelinghuysen Rd\\
Piscataway, NJ 08854-8019}
\email{ajhconway@gmail.com}

\address{Fachbereich Mathematik\\
 TU Darmstadt\\
Schlossgartenstrasse 7\\
64289 Darmstadt, Germany}
\email{walter.freyn@math.tu-darmstadt.de}

 \address{Department of Mathematical Sciences \\
Binghamton University \\
Binghamton, New York 13902-6000 }
\email{diegopenta@gmail.com}

\thanks{The first author was supported in part by NSF grant number DMS--1101282.}

%\subjclass[2000]{Primary: ; Secondary }
%\keywords{Kac--Moody, }

\begin{abstract} Let $\mathcal{D}$ be a Dynkin diagram and let $\Pi=\{\alpha_1,\dots ,\alpha_{\ell}\}$ be the simple roots of the corresponding Kac--Moody  root system. Let $\mathfrak{h}$ denote the Cartan subalgebra, let $W$ denote the Weyl group and let $\Delta$ denote the set of all roots. The action of $W$ on $\mathfrak{h}$, and hence on $\Delta$, is the discretization of the action of the Kac--Moody  algebra. Understanding the orbit structure of $W$ on $\Delta$ is crucial for many physical applications. We show that for $i\neq j$, the simple roots $\alpha_i$ and $\alpha_j$ are in the same $W$--orbit if and only if vertices $i$ and $j$  in the Dynkin diagram corresponding to $\alpha_i$ and $\alpha_j$ are connected by a path consisting only of single edges.  We introduce the notion of `the Cayley graph $\mathcal{P}$ of the Weyl group action on real roots' whose connected components  are in one-to-one correspondence with the disjoint orbits of $W$. For a symmetric hyperbolic generalized Cartan matrix $A$ of rank $\geq 4$ we prove that any 2 real roots of the same length lie in the same $W$--orbit. We show that if the generalized Cartan matrix $A$ contains zeros, then there are simple roots that are stabilized by simple root reflections in $W$, that is, $W$ does not act simply transitively on  real roots.  We give sufficient conditions in terms of the generalized Cartan matrix $A$ (equivalently ${\mathcal D}$) for $W$ to stabilize a real root. Using symmetry properties of the imaginary light cone in the hyperbolic case, we deduce that the number of $W$--orbits on imaginary roots on a hyperboloid of fixed radius is bounded above by the number of root lattice points on the hyperboloid that intersect the closure of the fundamental region for $W$.

\end{abstract}

\maketitle

\section{Introduction}

\noindent Kac--Moody  algebras are the most natural generalization to infinite dimensions of the notion of a finite dimensional semisimple Lie algebra. The data for constructing a Kac--Moody  algebra contains an integer matrix called a generalized Cartan matrix. Let $A$ be an $\ell\times \ell$ generalized Cartan matrix with Kac--Moody  algebra $\mathfrak{g}=\mathfrak{g}(A)$, root system $\Delta=\Delta(A)$, real roots $\Phi\subseteq\Delta$, and Weyl group $W=W(A)$. Let $\Pi=\{\alpha_1,\ldots,\alpha_{\ell}\}$ denote a fixed base of $\Delta$.

The group $W$ acts on $\Delta$, but a certain subset of $\Delta$ can be described purely in terms of the action of $W$ on $\Pi$.  This is the subset of `real roots' $\Phi=W\Pi$, characterized by the property that their `squared lengths' with respect to a symmetric invariant bilinear form is positive.  This is in contrast to the so-called `imaginary roots' in $\Delta$ whose squared length is zero or negative.  If $A$ is a Cartan matrix of finite type, then all roots are real and so $\Delta=W\Pi$ for a root basis $\Pi$, but if $A$ is not of finite type, then $\Delta$ has a non-trivial subset of imaginary roots.

Our object of study is the orbit structure of $W$ on the  roots of an arbitrary Kac--Moody  root system with Dynkin diagram $\mathcal{D}$ and simple roots $\Pi=\{\alpha_1,\dots ,\alpha_{\ell}\}$.  

We pay special attention to the real and imaginary roots of hyperbolic Kac--Moody  root systems and their images under the Weyl group. These are known to have a physical interpretation. For example, in cosmological billiards, the walls of the billiard table are related to physical fluxes that, in turn, are related to real roots ([DHN]).  In [BGH]  Brown, Ganor and Helfgott show that real roots correspond to fluxes or instantons, and imaginary roots correspond to particles and branes. In [EHTW] Englert,  Houart,  Taormina, and West give a physical interpretation of Weyl group reflections in terms of M--theory. Many physical applications require a full understanding of the structure of the $W$--orbits on imaginary roots.

For an arbitrary Kac--Moody  root system, we show that for $i\neq j$, the simple roots $\alpha_i$ and $\alpha_j$ are in the same $W$--orbit if and only if vertices $i$ and $j$  in the Dynkin diagram corresponding to $\alpha_i$ and $\alpha_j$ are connected by a path consisting only of single edges.  It follows that the disjoint orbits of the Weyl group on real roots are in one-to-one correspondence with connected components of a graph obtained from the Dynkin diagram by removing all multi-edges, arrows and edge-labels. Applying the classification of hyperbolic Dynkin diagrams ([CCCMNNP]), it follows that the maximal number of disjoint Weyl group orbits on real roots of a  hyperbolic root system is 4. 

We introduce the notion of `the Cayley graph $\mathcal{P}$ of the Weyl group action on real roots' which we associate to the Dynkin diagram, or equivalently to the generalized Cartan matrix. The connected components of $\mathcal{P}$ are in one--to--one correspondence with the disjoint orbits of $W$. The vertices of $\mathcal{P}$ in a given connected component are in one--to--one  correspondence with the roots in the corresponding Weyl group orbit and $W$ acts transitively on  each connected component of $\mathcal{P}$.  In [CCCMNNP], we applied this construction to  hyperbolic root systems to determine the disjoint orbits of the action of the Weyl group on real roots.

It should be noted that for a finite root system associated to a simple Lie algebra, all roots of the same length lie in the same $W$--orbit.  This result does not extend to infinite dimensional root systems.  In particular, given a Kac--Moody  root system, real roots of the same length may lie in different $W-$orbits. Given a symmetric generalized Cartan matrix $A$ of  noncompact hyperbolic type such that real roots of the same length lie in distinct orbits, we show that there is a nonsymmetric generalized Cartan matrix $A'$ with the same Weyl group as $A$ and the same Coxeter matrix as $A$, such that the real roots of different  lengths lie in distinct $W$--orbits. In other words, for any Kac--Moody  Weyl group $W$ there is a symmetrizable Kac--Moody  algebra such that the  lengths of real roots in different orbits are different.

For a symmetric hyperbolic generalized Cartan matrix $A$ of rank $\geq 4$ or symmetrizable hyperbolic generalized Cartan matrix $A$ of rank $\geq 7$, we prove that any 2 real roots of the same length lie in the same $W$--orbit.

We show that if the generalized Cartan matrix $A$ contains zeros then there are simple roots that are stabilized by simple reflections in $W$, that is, $W$ does not act simply transitively on  real roots. We give sufficient conditions in terms of the generalized Cartan matrix $A$ (equivalently ${\mathcal D}$) for $W$ to stabilize a real root.  

We examine some elementary properties of the Weyl group orbits on imaginary roots using symmetry properties of the imaginary light cone in the hyperbolic case. We deduce that the number of $W$--orbits on imaginary roots on a hyperboloid of fixed radius is bounded above by the number of root lattice points on the hyperboloid that intersect the closure of the fundamental region for $W$.

After preparation of this manuscript, we learned that in the finite dimensional case, Carter  has given a characterization of the conjugacy classes of $W$ in the case that $W$ is finite ([Ca]). By identifying roots with $\Z/2\Z$--subgroups Carter's work may be used to deduce some results on the structure of Weyl group orbits when $W$ is finite.

The authors wish to thank Daniel Allcock and Siddhartha Sahi for helpful conversations. \linebreak  We are grateful to Yusra Naqvi for helpful discussions regarding Weyl group orbits on imaginary roots. The authors also wish to thank the referees whose helpful comments led to some improvements in the exposition.

 %%%%%%%%%%%%%%%%%%%%
 
 \section{Kac--Moody algebras}

\medskip\noindent  We may construct a Kac--Moody algebra from certain data which  includes a {\it generalized Cartan matrix}. This is a square matrix  $A=(a_{ij})_{i,j\in I}$, $I=\{1,\dots,\ell\}$, whose  entries satisfy the conditions 

\medskip\noindent  $a_{ij}\in{\Z}$, $i,j\in I$,  

\medskip\noindent  $a_{ii}=2$, $i\in I$, 

\medskip\noindent  $a_{ij}\leq 0$ if $i\neq j$, and

\medskip\noindent  $a_{ij}=0\iff a_{ji}=0$.

\medskip
\noindent We may also consider the case that $A$ is {\it symmetrizable:} there exist positive rational numbers $q_1,\dots, q_{\ell}$, such that
the matrix $DA$ is symmetric, where $D=diag(q_1,\dots, q_\ell)$.

\medskip 
\noindent  By a {\it proper submatrix} of $A$, we mean a matrix of the form $A_{\theta}=(a_{ij})_{i,j\in \theta},$ where $\theta$ is a proper subset of $\{1,\dots,\ell\}$.  We say that  $A$ is {\it indecomposable} if there is no partition of the set $\{1,\dots,\ell\}$ into
two non-empty subsets so that $a_{ij}=0$ whenever $i$ belongs to the first subset, while $j$ belongs to the second.

\medskip
\noindent {\bf Possible types}

\medskip
 \noindent {\it Finite type} $A$ is positive-definite, $det(A)>0$. In this case $A$ is the Cartan matrix of a finite dimensional  semisimple Lie algebra.

\medskip
 \noindent{\it Affine type} $A$ is positive-semidefinite, but not positive-definite, $det(A)=0$.

\medskip
 \noindent{\it Indefinite type} $A$ is neither of finite nor affine type, $det(A)<0$.
 
\medskip
 \noindent{\it Hyperbolic type} $A$ is neither of finite nor affine type, but every proper, indecomposable submatrix is either of finite or of affine type, $det(A)<0$.

\medskip
 \noindent If the generalized Cartan matrix $A$ has at least one proper indecomposable matrix of affine type, then we say that $A$ is of {\it noncompact hyperbolic type}.

\newpage
\medskip\noindent{\bf{The Dynkin diagram of a generalized Cartan matrix}}
\medskip

\noindent  The {\it Dynkin diagram} $\mathcal{D}$ of a generalized Cartan matrix $A= (a_{ij})$ is the graph with one 
node for each row (or column) of $A$. If $i\neq j$ and $a_{ij}a_{ji}
\leq 4$ then we connect $i$ and $j$ with  max $( |a_{ij}|, |a_{ji}| )$ edges together with an arrow towards $i$ if $|a_{ij}| > 1$.  
If $a_{ij}a_{ji} > 4$ we draw a bold face line labeled with the ordered pair $(|a_{ij}|$, $|a_{ji}|)$.

\subsection{The Kac--Moody  algebra of a generalized Cartan matrix}

\medskip
\noindent Let $\langle\cdot,\cdot\rangle: \mathfrak{h}\times \mathfrak{h}^{\ast}\to\mathbb{C}$ denote the natural nondegenerate bilinear pairing between a vector space $\mathfrak{h}$ and its dual.

\medskip
 \noindent Given:

\medskip \noindent $\circ$ a generalized Cartan matrix $A=(a_{ij})_{i,j\in I}$, $I=\{1,\dots ,\ell\}$, and

 \noindent $\circ$ a finite dimensional vector space $\mathfrak{h}$ (Cartan subalgebra) with $dim(\mathfrak{h})=2\ell-rank(A)$, and

\noindent $\circ$ a choice of {\it simple roots} $\Pi=\{\alpha_1,\dots,\alpha_{\ell}\}\subseteq \mathfrak{h}^{\ast}$ and {\it simple coroots}
$\Pi^{\vee}=\{\alpha_1^{\vee},\dots,\alpha_{\ell}^{\vee}\} \subseteq
\mathfrak{h}$ such that $\Pi$ and $\Pi^{\vee}$ are linearly independent and such that $\langle\alpha_j,\alpha_i^{\vee}\rangle=\alpha_j(\alpha_i^{\vee})=a_{ij}$, $i,j=1,\dots \ell$,

\medskip
\noindent we may associate a Lie algebra $\mathfrak{g}=\mathfrak{g}(A)$  over $K$, a field, generated by $\mathfrak{h}$ and elements $(e_i)_{i\in I}$, $(f_i)_{i\in I}$ subject to relations (([M1], [M2], and [K], Theorem 9.11):

\medskip
 $[\mathfrak{h}, \mathfrak{h}]=0$,

 $[h,e_i]=\alpha_i(h) e_i$, $h\in \mathfrak{h}$,

 $[h,f_i]=-\alpha_i(h) f_i$, $h\in \mathfrak{h}$,

 $[e_i,f_i]=\alpha_i^{\vee}$,

 $[e_i,f_j]=0,\ i\neq j$,

 $(ad\ e_i)^{-a_{ij}+1}(e_j)=0,\ i\neq j$,

 $(ad\ f_i)^{-a_{ij}+1}(f_j)=0,\ i\neq j$,

\medskip
\noindent where $(ad(x))(y)=[x,y]$.

We have $rank(A)=\ell \iff det(A)\neq 0$. If $det(A)=0$ as in the affine case, then $rank(A)<\ell$.  As usual, $\Delta$ is a subset of $\mathfrak{h}^*$ and we identify $\mathfrak{h}^*$ with $\mathfrak{h}$ via the invariant form $(\cdot\mid\cdot)$ defined in Subsection~\ref{form}.

\subsection{Weyl group}

\medskip
 For each simple root $\alpha_i$, $i=1,\dots, \ell$  we define the simple root reflection
$$w_i(\alpha_j)
    =
    \alpha_j - \alpha_j(\alpha_i^{\vee})\alpha_i.$$
It follows from the formula that $w_i(\alpha_i)=-\alpha_i$.  The $w_i$ generate a subgroup $$W=W(A)\subseteq Aut(\mathfrak{h}^{\ast}),$$ called the {\it Weyl group} of $A$. The group $W$ acts on the set of all roots. For $i,j\in I$, and for $i\neq j$, we set
$$c_{ii}=1,\quad c_{ij}=2,3,4,6,{\text{ or }}\infty$$
according as
$$a_{ij}a_{ji}=0,1,2,3,{\text{ or }}\geq 4$$
respectively. Then $W=W(A)$ is the Coxeter group with presentation: 
$$W=\langle w_i\mid i\in I, (w_iw_j)^{c_{ij}}=1,\ if\ c_{ij}\neq\infty\rangle.$$
If $a_{ij}a_{ji}\geq 4$, then $c_{ij}=\infty$ however the relation $`(w_iw_j)^{\infty}=1'$, meaning that the element $w_iw_j$ has infinite order, is not explicitly included in the presentation. 

 If $\Mg$ is of finite type, $W$ is a finite group. If  $\Mg$ is infinite dimensional, $W$ is infinite ([K], Ch 3).  The group $W$ acts on the set of all roots.

\subsection{Invariant form}\label{form} If $A$ is a symmetrizable generalized Cartan matrix,  the algebra $\mathfrak{g}=\mathfrak{g}(A)$ admits a well--defined non--degenerate symmetric bilinear form $(\cdot\mid\cdot)$ which plays the role of `squared length' of a root ([K], Theorem 2.2). If $\mathfrak{g}$ is of finite type, then $(\cdot\mid\cdot)$ is the usual Killing form. The form $(\cdot\mid\cdot)$  is an analog of the Killing form if $\mathfrak{g}=\mathfrak{g}(A)$ is infinite dimensional and $(\cdot\mid\cdot)\mid_{\mathfrak{h}^{\ast}}$ is $W$--invariant.

\subsection{Real and imaginary roots} 

\medskip\noindent If $\mathfrak{g}=\mathfrak{g}(A)$ is  finite dimensional, all roots are Weyl group translates of simple roots. That is,
$$\Delta\quad=\quad W\Pi.$$
For infinite dimensional Kac--Moody root systems, there are additional mysterious roots of negative norm (`squared length') called imaginary roots.  

\medskip\noindent  A root $\alpha\in\Delta$ is called a {\it real root} if there exists $w\in W$ such that $w\alpha$ is a
simple root. A root $\alpha$ which is not real is called {\it
imaginary}. We denote by $\Phi=\Delta^{re}$ the real roots,  $\Delta^{im}$ the
imaginary roots, $\Phi_{\pm}=\Delta^{re}_{\pm}$ the positive and negative Weyl roots,
and $\Delta^{im}_{\pm}$ the positive and negative imaginary roots. Then
$$\Delta\quad=\quad\Delta^{re}\sqcup\Delta^{im}$$
$$\Delta^{re}\quad=\quad \Delta^{re}_+\sqcup\Delta^{re}_-,$$
$$\Delta^{im}\quad=\quad\Delta^{im}_+\sqcup\Delta^{im}_-,$$
and
$$\Phi\quad=\quad W\Delta^{re}\quad = \quad W\Pi.$$
Real and imaginary roots may be characterized by their `squared length' $(\alpha\mid\alpha)$ for $\alpha\in\Delta$. If $A$ is symmetrizable, we have ([K], Prop 5.1 and 5.2):
$$\alpha\in\Delta^{re}\iff (\alpha\mid\alpha)>0$$
$$\alpha\in\Delta^{im}\iff (\alpha\mid\alpha)\leq 0.$$

%%%%%%%%%%%%%%%%%%%%
\subsection{Reflection geometry of roots}

 Let $\mathfrak{h}$ be a Cartan subalgebra of a Kac--Moody  algebra $\mathfrak{g}$ and let $\alpha$ be a root. Associated to  $\alpha$ is a root vector $h_{\alpha}\in \mathfrak{h}$, a hyperplane $H_{\alpha}$ and an involution, $s_{\alpha}$ such that $s_{\alpha}|_{H_{\alpha}}=\Id$ and $s_{\alpha}(h_{\alpha})=-h_{\alpha}$. Hence, with respect to the invariant form $(\cdot\mid\cdot)$, the hyperplane $H_{\alpha}$ is orthogonal to the root vector $h_{\alpha}$.

Let now $\beta$ denote another root. The following observation is clear:

$$s_{\alpha}(\beta)=\beta\qquad \textrm{if and only if}\qquad h_{\beta}\in H_{\alpha}\,.$$

Assume $\alpha$ and $\beta$ are simple roots in a basis $\Pi$ of the root system. Then $$\qquad h_{\beta}\in H_{\alpha}\qquad \textrm{if and only if}\qquad  a_{\alpha,\beta}=a_{\beta,\alpha}=0\,$$
where $a_{\alpha,\beta}=a_{\beta,\alpha}$ is the $\alpha,\beta$--entry of the generalized Cartan matrix.

If $\alpha$ and $\beta$ generate a finite root system of type $B_2$ or $C_2$ such that $\alpha$ is a short root and $\beta$ is a long root, then the roots $\pm(\alpha+\beta)$ are in $H_{\alpha}$ and the roots $\pm (\beta+2\alpha)$ are in $H_{\beta}$.   

The following lemma is a rewording of [K], Lemma (3.7).

\begin{lemma}\label{posroot} Let $\Mg$ be a symmetrizable Lie algebra or Kac--Moody  algebra. Let $\alpha$ be any positive root. Let $w_j$ be a simple root reflection. Then $w_j\alpha$ is also a positive root, unless $\alpha$ equals the simple root $\alpha_j$ in which case $w_j\alpha_j=-\alpha_j$.
\end{lemma}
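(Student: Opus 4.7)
The plan is to expand $\alpha$ in the basis of simple roots and track how the coefficients transform under $w_j$. Write $\alpha = \sum_{i=1}^{\ell} k_i \alpha_i$ with $k_i \in \Z_{\geq 0}$, which we may do since $\alpha$ is a positive root. Applying the simple reflection $w_j$, we compute
\begin{equation*}
w_j(\alpha) \;=\; \alpha - \alpha(\alpha_j^\vee)\alpha_j \;=\; \sum_{i \neq j} k_i\,\alpha_i \;+\; \Bigl(k_j - \sum_{i=1}^{\ell} k_i\, a_{ji}\Bigr)\alpha_j.
\end{equation*}
The decisive observation is that for every $i \neq j$, the coefficient of $\alpha_i$ in $w_j(\alpha)$ is unchanged and therefore remains the nonnegative integer $k_i$.

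Next I would split into two cases. If $\alpha = \alpha_j$, then $k_j = 1$ and all other $k_i = 0$, and the formula gives $w_j(\alpha_j) = -\alpha_j$, which is negative as claimed. If $\alpha \neq \alpha_j$, then since $\alpha$ is a root distinct from $\alpha_j$ and has nonnegative simple-root coordinates, there must exist some index $i_0 \neq j$ with $k_{i_0} > 0$. Consequently the expression for $w_j(\alpha)$ has at least one strictly positive coefficient, namely $k_{i_0}$ in the $\alpha_{i_0}$ slot.

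To conclude, I would invoke the fundamental dichotomy for roots of a (symmetrizable) Kac--Moody algebra: any $\beta \in \Delta$ has simple-root coordinates that are either all nonnegative or all nonpositive (see, e.g., [K], Theorem 5.4 / Proposition 5.1). Applied to $\beta = w_j(\alpha)$, the existence of the strictly positive coefficient $k_{i_0}$ forces all coefficients of $w_j(\alpha)$ to be nonnegative, so $w_j(\alpha) \in \Delta^+$, completing the proof.

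The only nontrivial ingredient used beyond linear algebra is the $\Z_{\geq 0}$-versus-$\Z_{\leq 0}$ dichotomy for the coordinates of a root in the basis $\Pi$, which is the main obstacle in the sense that it relies on the structure theory of $\mathfrak{g}(A)$ rather than on any direct Weyl-group manipulation; once that fact is in hand, the argument is a one-line inspection of the transformation rule for $w_j$.
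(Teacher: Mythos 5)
Your proof is correct and is essentially the standard argument behind [K], Lemma 3.7, which the paper cites rather than reproving: expand $\alpha$ in the simple roots, observe that $w_j$ leaves every coordinate at $i\neq j$ unchanged, and invoke the all-nonnegative-or-all-nonpositive dichotomy for root coordinates. The one step worth making explicit is why $\alpha\neq\alpha_j$ forces some $k_{i_0}>0$ with $i_0\neq j$: you need the standard fact that $k\alpha_j$ is not a root for $k\geq 2$ (see [K], \S 1.3), so a positive root supported only on the index $j$ must equal $\alpha_j$ itself.
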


We will use these observations in the sections that follow.

\section{Weyl group orbits on real roots}

\noindent Let $A$ be a  generalized Cartan matrix. Let $\mathcal{D}=\mathcal{D}(A)$ be the corresponding Dynkin diagram with vertices indexed by $I=\{1,2,\ldots,\ell\}$.  We let $\mathcal{D}_{\ast}$ denote the graph obtained from $\mathcal{D}$ by deleting all multiple edges, including arrows and edge labels. Let $\mathcal{D}_1,\dots,\mathcal{D}_s$ denote the connected subdiagrams of $\mathcal{D}_{\ast}$. We call $\mathcal{D}_{\ast}$ the {\it skeleton} of $\mathcal{D}$. We may describe the graph $\mathcal{D}_{\ast}$ as follows

$Vertices(\mathcal{D}_{\ast})=Vertices(\mathcal{D})$ with the same labelling, that is, indexed by $I=\{1,2,\ldots,\ell\}$

$Edges(\mathcal{D}_{\ast})=\bigcup_{i=1}^s \ Edges(\mathcal{D}_i)$

\noindent  Vertices $i$ and $j$ are adjacent in $\mathcal{D}_{\ast}$ if and only if $i$ and $j$  are connected in $\mathcal{D}$ by a single edge with no arrows or edge labels. This occurs if and only if  $a_{ij}=a_{ji}=-1$. 

In the discussions that follow, we let $A$ be a generalized Cartan matrix, $W(A)$ be its Weyl group, $\Pi$ the set of simple roots indexed by $I$,  $\Phi$ the set of real roots, and $\mathcal{D}$ the corresponding Dynkin diagram.  Our main objective is to classify the Weyl orbits on $\Phi$ using only the associated Dynkin diagram.  This can be accomplished using a lemma of Tits in association with the theorem below, which requires some results regarding symmetric rank 2 root subsystems.

If $\Pi=\{\alpha_1,\ldots,\alpha_{\ell}\}$, then any real root $\beta\in\Phi$ can be written as a linear combination of the simple roots,
$$\beta=\sum^{\ell}_{i=1} b_i \alpha_i,$$
where the coordinates $\{b_i\}$ are integers, and are uniquely determined by $\beta$.  Moreover, the coordinates are either all positive or all negative.  In the results that follow, we will need to focus our attention on specific coordinates of a given real root.

Let $\Pi=\{\alpha_1,\ldots,\alpha_{\ell}\}$. Let $\Pi^{\vee}=\{\rho_1,\dots,\rho_{\ell}\}$ be  the dual basis to $\alpha_i$. Combinatorially, for a real root $\beta$,  with $\beta=\sum^{\ell}_{i=1} b_i \alpha_i$, $\rho_i(\beta)$ is the $i^{th}$ coordinate of $\beta$ with respect to the simple roots $\Pi$.
For simple root $\alpha_j$, $\rho_i(\alpha_j)=\delta_{ij}$, where $\delta_{ij}$ is the Kronecker delta function.  We will use these combinatorial details in the results that follow.

Now consider the rank 2 symmetric generalized Cartan matrix $A$,

$$A = \left(\begin{array}{cc}2 & -a \\-a & 2 \end{array}\right),\ a\geq 2.$$   
 We recall that $W=W(A)$ is the infinite dihedral group 
 $$D_{\infty}=\Z\rtimes \Z/2\Z= \langle a,x \mid x^2 = 1,\  x^{-1}ax = a^{-1} \rangle.$$

The following lemma follows easily from the fact that the transformation $(w_1w_2)$ in $W$ has infinite order and generates the $\mathbb{Z}$--factor of $D_{\infty}$. 
\begin{lemma}\label{lem1}
Let $A = \left(\begin{array}{cc}2 & -a \\-a & 2 \end{array}\right)$ for $a \ge2$, $\Pi=\{\alpha_1, \alpha_2\}$ simple roots.  Let $\beta_n=(w_2w_1)^n(-\alpha_1)$ and $\gamma_n=(w_1w_2)^n(-\alpha_1)$ for some $n\in\mathbb{Z}_{\ge0}$.  Then
\begin{enumerate}
	\item  $(\rho_1(\beta_n))_{n\in\mathbb{Z}_{\ge0}}$  is an increasing sequence of integers.
	\item $(\rho_1(\gamma_n))_{n\in\mathbb{Z}_{\ge0}}$ is a strictly decreasing sequence of integers.
\end{enumerate}
\end{lemma}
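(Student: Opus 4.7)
The plan is to write $\beta_n$ and $\gamma_n$ in coordinates with respect to $\Pi = \{\alpha_1,\alpha_2\}$, derive a linear recurrence for $\rho_1(\beta_n)$, establish monotonicity by induction, and then deduce part (2) from part (1). First I would record the reflection formulas $w_1(\alpha_1) = -\alpha_1$, $w_1(\alpha_2) = \alpha_2 + a\alpha_1$, $w_2(\alpha_1) = \alpha_1 + a\alpha_2$, $w_2(\alpha_2) = -\alpha_2$. Writing $\beta_n = p_n \alpha_1 + q_n \alpha_2$ so that $p_n = \rho_1(\beta_n)$, a direct computation of $(w_2 w_1)(p_n\alpha_1 + q_n\alpha_2)$ yields
\begin{align*}
p_{n+1} &= a q_n - p_n, \\
q_{n+1} &= a p_{n+1} - q_n,
\end{align*}
with initial values $p_0 = -1$, $q_0 = 0$, $p_1 = 1$, $q_1 = a$. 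Integrality of $p_n,q_n$ is automatic since $\beta_n$ lies in the root lattice. Eliminating $q_n$ gives the single second-order recurrence
$$p_{n+2} = (a^2 - 2)\,p_{n+1} - p_n.$$

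For part (1), I would prove by induction on $n \geq 1$ the conjunction $p_n \geq 1$ and $p_{n+1} > p_n$. The base case $n=1$ is $p_1 = 1$ and $p_2 = a^2 - 1 > 1$, using $a \geq 2$. For the inductive step, the hypothesis $p_n > p_{n-1} \geq 1$ combined with $a^2 - 2 \geq 2$ yields
$$p_{n+1} \;=\; (a^2-2)\,p_n - p_{n-1} \;\geq\; 2 p_n - p_{n-1} \;=\; p_n + (p_n - p_{n-1}) \;>\; p_n \;\geq\; 1.$$
Combined with the initial step $p_0 = -1 < 1 = p_1$, this shows that $(\rho_1(\beta_n))_{n \geq 0}$ is (strictly) increasing.

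For part (2), I would exploit the identity $w_1(w_2 w_1)^n = (w_1 w_2)^n w_1$, which is immediate from writing out the alternating word, together with $w_1(-\alpha_1) = \alpha_1$, to get
$$w_1 \beta_n \;=\; (w_1 w_2)^n(\alpha_1) \;=\; -(w_1 w_2)^n(-\alpha_1) \;=\; -\gamma_n.$$
Applying $w_1$ directly to $\beta_n = p_n \alpha_1 + q_n \alpha_2$ gives $w_1\beta_n = (aq_n - p_n)\alpha_1 + q_n \alpha_2 = p_{n+1}\alpha_1 + q_n\alpha_2$ by the first recurrence, so $\rho_1(\gamma_n) = -p_{n+1}$. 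The strict increase of $(p_{n+1})$ then immediately delivers the strict decrease of $(\rho_1(\gamma_n))$. The only real care point in the whole argument is choosing the right inductive invariant, namely keeping both $p_n \geq 1$ and $p_{n+1} > p_n$, so that $(a^2-2)\,p_{n+1} - p_n$ can be bounded below by $2p_{n+1} - p_n$; beyond that, everything reduces to routine computation.
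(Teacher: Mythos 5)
Your proof is correct. Note that the paper itself gives no detailed argument here: it simply asserts that the lemma ``follows easily from the fact that $w_1w_2$ has infinite order and generates the $\mathbb{Z}$--factor of $D_\infty$,'' which by itself only explains why the orbit is infinite, not why the first coordinate is monotone. Your route supplies exactly the missing quantitative content: the coordinate recurrences $p_{n+1}=aq_n-p_n$, $q_{n+1}=ap_{n+1}-q_n$, their reduction to $p_{n+2}=(a^2-2)p_{n+1}-p_n$, and the induction on the invariant ``$p_n\ge 1$ and $p_{n+1}>p_n$'' (which is where $a\ge 2$, i.e.\ $a^2-2\ge 2$, enters) give strict monotonicity directly, and the identity $w_1(w_2w_1)^n=(w_1w_2)^n w_1$ together with $w_1\beta_n=p_{n+1}\alpha_1+q_n\alpha_2$ cleanly reduces part (ii) to part (i) via $\rho_1(\gamma_n)=-p_{n+1}$; I verified these computations and the base cases $p_0=-1$, $p_1=1$, $p_2=a^2-1$. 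The only blemish is a harmless index slip in your closing remark, where the bound should read $(a^2-2)p_n-p_{n-1}\ge 2p_n-p_{n-1}$ rather than with indices $n+1$, $n$; the displayed induction itself is stated correctly.
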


We are now ready to show that the Weyl group $W$ is not transitive on the symmetric rank 2 root system investigated above if $a\ge 2$.

\noindent
\begin{lemma}\label{lem2}
Let $A$ be the rank 2 generalized Cartan matrix $A = \left(\begin{array}{cc}2 & -a \\-a & 2 \end{array}\right)$, and $\mathcal{D}$ its associated Dynkin diagram with 2 vertices. Let $\Pi=\{\alpha_1, \alpha_2\}$ be the simple roots.  If $a \ge2$ (equivalently if $\mathcal{D}$ is affine or hyperbolic), then $W\{\alpha_1\}\cap W\{\alpha_2\}=\emptyset$, that is, there is no $w\in W$ such that $w(\alpha_1)=\alpha_2$.
\end{lemma}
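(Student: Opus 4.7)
The plan is to establish the stronger statement that $\rho_1(w\alpha_1) \neq 0$ for every $w\in W$; since $\rho_1(\alpha_2) = 0$, this immediately rules out $w\alpha_1 = \alpha_2$. The method will be to enumerate the elements of $W$ explicitly using its dihedral structure and then apply Lemma~\ref{lem1} family by family.

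First I will invoke the Coxeter presentation. Because $a\geq 2$ gives $a_{12}a_{21}\geq 4$, we have $c_{12} = \infty$, so $W = \langle w_1, w_2 \mid w_1^2 = w_2^2 = 1\rangle \cong D_\infty$ with no braid relation. Every element of $W$ therefore has a unique reduced expression as an alternating word in $w_1, w_2$, and $W$ is exhausted by the four families $(w_2w_1)^n$, $(w_1w_2)^n$, $(w_1w_2)^nw_1$, and $(w_2w_1)^nw_2$ for $n\geq 0$ (with $n = 0$ recovering $e$, $e$, $w_1$, $w_2$ respectively).

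Next I will use $w_1\alpha_1 = -\alpha_1$ together with the identity $(w_2w_1)^nw_2 = (w_2w_1)^{n+1}w_1$ (a trivial consequence of $w_1^2=1$) to rewrite the action of each family on $\alpha_1$ in terms of the sequences $\beta_n, \gamma_n$ of Lemma~\ref{lem1}:
\begin{align*}
(w_2w_1)^n\alpha_1 &= -\beta_n, & (w_1w_2)^n\alpha_1 &= -\gamma_n,\\
(w_1w_2)^nw_1\alpha_1 &= \gamma_n, & (w_2w_1)^nw_2\alpha_1 &= \beta_{n+1}.
\end{align*}
Thus the orbit equals $W\alpha_1 = \{\pm\beta_n : n\geq 0\}\cup\{\pm\gamma_n : n\geq 0\}$, and the problem reduces to showing that none of these vectors has vanishing first coordinate.

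Finally I will apply Lemma~\ref{lem1}. The initial values $\rho_1(\beta_0) = \rho_1(\gamma_0) = -1$ and $\rho_1(\beta_1) = \rho_1(\alpha_1 + a\alpha_2) = 1$ follow directly from $\beta_0 = \gamma_0 = -\alpha_1$ and $\beta_1 = w_2(\alpha_1)$. Combined with the monotonicity assertions of the lemma, these force $\rho_1(\beta_n)\geq 1$ for $n\geq 1$ and $\rho_1(\gamma_n)\leq -1$ for $n\geq 0$. Hence $\rho_1\neq 0$ on every element of $W\alpha_1$, so no such element can equal $\alpha_2$. The only delicate point I anticipate is verifying that the ``increasing'' integer sequence $\rho_1(\beta_n)$ cannot pass through $0$; this is forced because the sequence is integer-valued and monotone, and makes the single jump from $\rho_1(\beta_0)=-1$ to $\rho_1(\beta_1)=1$, after which monotonicity prevents any return to $0$.
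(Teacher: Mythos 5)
Your proof is correct, and it leans on the same two ingredients as the paper's — the infinite dihedral structure of $W$ and the monotonicity of $\rho_1$ along the sequences $\beta_n,\gamma_n$ from Lemma~\ref{lem1} — but it organizes the argument differently. The paper argues by contradiction: it takes a reduced word $w$ with $w\alpha_1=\alpha_2$, splits into cases according to the first letter, argues that the remaining factor must be a translation $(w_1w_2)^m$ or $(w_2w_1)^m$ (using the slightly informal observation that both roots lie on the ``upper branch''), and then gets a contradiction because the monotone integer sequence $\rho_1$ would have to run from $-1$ to $0$ while decreasing, or from $1$ to $0$ while increasing. You instead enumerate all of $D_\infty$ via the four alternating-word families, compute the entire orbit $W\alpha_1=\{\pm\beta_n\}\cup\{\pm\gamma_n\}$ explicitly, and show $\rho_1\neq 0$ on it, which rules out $\alpha_2$ since $\rho_1(\alpha_2)=0$. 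Your route buys a stronger and cleaner statement (an explicit description of the orbit, and the fact that no element of $W\alpha_1$ has vanishing first coordinate), and it sidesteps the reduced-word case analysis and the ``upper branch'' step; the paper's version avoids writing down the classification of elements of $D_\infty$, at the cost of that extra bookkeeping. Your verifications of the initial values $\rho_1(\beta_0)=\rho_1(\gamma_0)=-1$, $\rho_1(\beta_1)=1$, the identity $(w_2w_1)^nw_2=(w_2w_1)^{n+1}w_1$, and the handling of the non-strict ``increasing'' case are all sound, so there is no gap.
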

\begin{proof}
Assume there is an element $w\in W$ such that $w(\alpha_1)=\alpha_2$.  Without loss of generality, assume $w$ is reduced (or simply replace $w$ with its corresponding unique reduced word).   Since $W$ is generated by 2 simple root reflections, the first `letter' of $w$ must either be $w_1$ or $w_2$.  

 \textit{Case 1}: Assume $w$ begins with $w_1$, so $w=u\cdot w_1$ for some Weyl group element $u\in W$. Then
$$\alpha_2=w(\alpha_1)=u\cdot w_1(\alpha_1)=u(-\alpha_1).$$

Both $-\alpha_1$ and $\alpha_2$ lie on the upper branch of $\Phi$, therefore $u$ must be a translation of the form 
$u=(w_1w_2)^m$ or $u=(w_2w_1)^m$ for some $m\in\mathbb{Z}_{>0}$.  However, since $w$ is reduced, $u$ must begin with $w_2$, so we omit the latter case.  Thus,
$$u(-\alpha_1)=(w_1w_2)^m(\alpha_1)=\alpha_2.$$

By Lemma ~\ref{lem1}$(ii)$, we now have the following decreasing sequence of integers:
$$\rho_1(-\alpha_1)>\rho_1\left(w_1w_2(-\alpha_1)\right)> \cdots \ >\rho_1\left((w_1w_2)^m(-\alpha_1)\right)>\cdots.$$

For $n=0$, $\rho_1(-\alpha_1)=-1$, and for $n=m$, $\rho_1((w_1w_2)^m(-\alpha_1))=\rho_1(\alpha_2)=0$. So the sequence is
$$-1>\cdots>0>\cdots,$$

which is false.  Therefore $w$ cannot begin with $w_1$.  

 \textit{Case 2}: Now assume $w$ begins with $w_2$, so $w=v\cdot w_2$ for some Weyl group element $v\in W$. Then
$$\alpha_2=w(\alpha_1)=v\cdot w_2(\alpha_1)=v(\alpha_1+a\alpha_2).$$

Both $\alpha_2$ and $\alpha_1+a\alpha_2$ lie on the upper branch of $\Phi$, and since $w$ is reduced, $v$ must begin with $w_1$. Therefore, $v$ is a translation of the form $v=(w_2w_1)^m$ for some $m\in \mathbb{Z}$.  Thus,
$$v(\alpha_1+a\alpha_2)=(w_2w_1)^m(\alpha_1+a\alpha_2)=\alpha_2.$$

 Observe that $\alpha_1+a\alpha_2=(w_2w_1)(-\alpha_1)$.  Therefore, by Lemma ~\ref{lem1}$(i)$, we have the following increasing sequence of integers: 
$$\rho_1(\alpha_1+a\alpha_2)<\rho_1\left(w_2w_1(\alpha_1+a\alpha_2)\right)< \cdots \ <\rho_1\left((w_2w_1)^m(\alpha_1+a\alpha_2)\right)<\cdots.$$

But $\rho_1(\alpha_1+a\alpha_2)=1$ and $\rho_1((w_2w_1)^m(\alpha_1+a\alpha_2))=\rho_1(\alpha_2)=0$, so the sequence is
$$1<\cdots<0<\cdots,$$

which is also false.  Thus, $w$ does not begin with $w_2$.

\end{proof}

\begin{lemma}\label{lem3} Let $A$ be any generalized Cartan matrix, $\beta \in \Phi$, $r\in I$.  Then if $i\not=r$,
$$\rho_i(w_r(\beta))=\rho_i(\beta).$$
 
\end{lemma}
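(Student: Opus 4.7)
The plan is to reduce directly to the reflection formula given in the paper, using linearity. First I would expand $\beta$ in the simple root basis as $\beta = \sum_{j=1}^{\ell} b_j \alpha_j$, so that by definition of the dual basis, $\rho_i(\beta) = b_i$.

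Next I would apply the formula for the simple reflection: extending $w_r$ linearly, for any $\mu \in \mathfrak{h}^\ast$ one has $w_r(\mu) = \mu - \mu(\alpha_r^{\vee})\alpha_r$. In particular, the only simple root whose coefficient is altered by $w_r$ is $\alpha_r$ itself; all other coefficients are unchanged. Concretely,
\[
w_r(\beta) \;=\; \beta - \beta(\alpha_r^{\vee})\,\alpha_r \;=\; \sum_{j \neq r} b_j \alpha_j \;+\; \bigl(b_r - \beta(\alpha_r^{\vee})\bigr)\alpha_r.
\]
Applying $\rho_i$ for $i \neq r$ and using $\rho_i(\alpha_r) = 0$ together with $\rho_i(\alpha_j) = \delta_{ij}$ immediately gives $\rho_i(w_r(\beta)) = b_i = \rho_i(\beta)$.

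There is essentially no obstacle here; the content of the lemma is just the observation that the action of a simple reflection $w_r$ on $\mathfrak{h}^\ast$ is a transvection along the direction $\alpha_r$, so only the $\alpha_r$-coefficient of any vector can change. The proof is a one-line calculation, and the main role of the lemma is as a bookkeeping tool for the coordinate arguments (in the spirit of Lemma~\ref{lem1} and Lemma~\ref{lem2}) that follow in the rest of the section.
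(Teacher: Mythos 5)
Your proof is correct and is essentially the paper's argument: both expand $\beta$ in the simple root basis and observe that the reflection $w_r$ can only alter the $\alpha_r$-coefficient, the paper doing this by applying $w_r(\alpha_j)=\alpha_j-a_{jr}\alpha_r$ term by term while you apply the linear formula $w_r(\beta)=\beta-\beta(\alpha_r^{\vee})\alpha_r$ directly. The two computations are the same rearranged, so there is nothing to add.
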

\begin{proof} Let $w_r$ act on a root $\beta\in\Phi$:
$$w_r\beta=w_r\sum_{i=1}^{\ell} b_i\alpha_i = \sum_{i=1}^{\ell} b_i(w_r\alpha_i)= \sum_{i=1}^{\ell} b_i(\alpha_i-a_{ir}\alpha_r)= \sum_{i=1 \ i \neq r}^{\ell} b_i \alpha_i + \left(b_r-\sum_{i=1}^{\ell} b_i a_{ir}\right)\alpha_r.$$
 Thus we see that for all $i\not=r$, $\rho_i(w_r(\beta))=b_i=\rho_i(\beta)$.

\end{proof}

Our objective is to prove that two simple roots lie in the same $W$--orbit if and only if their corresponding vertices lie in the same connected component of $D_*$. In other words, their corresponding vertices in the Dynkin diagram are connected by a path of single edges.  To prove the forward argument we use the following lemma of Tits, which we will strengthen with the aid of the foregoing results.

\noindent
\textbf{Lemma 13.31 (\cite{T})}. \textit{Let $\mathcal{D}$ be a Dynkin diagram, $W$ its associated Weyl group, and $\Pi$ the set of simple roots indexed over a set $I$.  If for some $j,k\in I$, $j\not=k$  there exists $w\in W$ such that 
$$w\alpha_j=\alpha_k,$$
then there exists a sequence $\{u_1,\ldots,u_{m-1}\}$ of elements of $W$,  and there exist sequences \newline $\{j=i_1,\ldots,i_m=k\}$, $i_t\in I$, and $\{s_1,\ldots,s_{m-1}\}$, $s_t\in I$ such that
$$w=u_{m-1}u_{m-2}\cdots u_2u_1$$
where $u_t(\alpha_{i_t})=\alpha_{i_{t+1}}$ and $u_t\in \langle w_{i_t}, w_{s_t} \rangle$, $t\in\{1,\ldots,m-1\}.$}

Essentially, if two simple roots $\alpha_j$ and $\alpha_k$ lie in the same Weyl orbit (that is, they are connected by a reduced Weyl group element $w$), then Tits' lemma concludes that $w$ is a product of elements from the first sequence $\{u_t\}$.  Moreover, we observe that the second sequence $\{i_t\}$ corresponds to a sequence of simple roots, starting with $\alpha_j$ and ending with $\alpha_k$.  Since each $u_t$ maps $\alpha_{i_t}$ to the next simple root $\alpha_{i_{t+1}}$ in this sequence, the Weyl group element $w$ maps $\alpha_j$ to $\alpha_k$ through the other $m-2$ simple roots in sequential order.  Figure ~\ref{fig:tits} illustrates this conclusion.

\begin{figure}[h!]
\includegraphics[width=2.5 in]{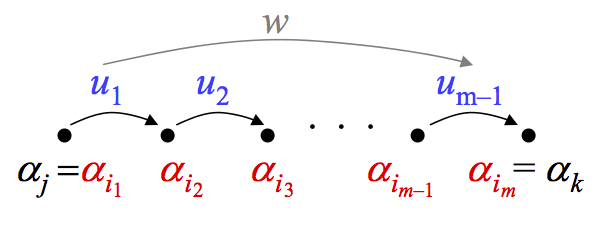}
\caption{The conclusion of Tits' lemma.}
\label{fig:tits}
\end{figure}

We must prove that the  hypotheses of the Lemma are sufficient for vertices $j=i_1$ and $k=i_m$ in the Dynkin diagram to be connected by a path of single edges.  If the sequence $\{j=i_1,\ldots,i_m=k\}$ corresponds to such a path, then we will need a full description of the rank 2 root subsystems generated by each pair of simple roots $\alpha_{i_t}$ and $\alpha_{i_{t+1}}$ in the sequence.  

However, the statement of the Lemma does not clearly define the  elements $u_t$ which comprise $w$, that is, for each $u_t$ a full description of the simple reflection $w_{s_t}$ is not provided.  In addition, if $u_t$ is a reduced word in $\langle w_{i_t}, w_{s_t} \rangle$, then there is no indication of its length.

In the next theorem, we determine these unknowns without ambiguity, and we follow with a much sharper statement of the lemma of Tits.  This later result will give us the sufficiency condition needed to allow us to classify the Weyl group orbits of any hyperbolic Kac--Moody  root system based solely on its associated Dynkin diagram.

\begin{theorem}\label{thm1}
Let $\mathcal{D}$ be a Dynkin diagram, $\Pi$ a basis of its associated root system, $I$ the index set for $\Pi$, and $W$ the corresponding Weyl group.  We assume that $W$ is generated by $\geq 2$ simple root reflections. Let $\widetilde{W}\leq  W$ be a subgroup generated by exactly 2 simple Weyl group reflections. For $j,k\in I$, let $A_{(j,k)}$ be the rank 2 submatrix of $A$,
\[
A_{(j,k)} =
\left(
\begin{array}{cc}
2 & a_{jk} \\
a_{kj} & 2 \end{array}
\right).
\]
If there exists a reduced word $w\in \widetilde{W}$ such that $w(\alpha_j)=\alpha_k$, then

(1) $A_{(j,k)}=A_2$,

(2) Vertices $j,k$ are adjacent in $\mathcal{D}_*$.

(3) $w=w_jw_k$,

(4) $\widetilde{W}= \langle w_j, w_k\rangle$,

\end{theorem}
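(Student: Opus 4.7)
My plan is first to identify the two simple reflections generating $\widetilde W$ and then to carry out a rank--$2$ case analysis on $A_{(j,k)}$. For the first part, write $\widetilde W = \langle w_p, w_q\rangle$ with $p\neq q$, and apply Lemma~\ref{lem3}: every $w_r$ with $r\in\{p,q\}$ fixes each coordinate $\rho_i$ for $i\notin\{p,q\}$, so the same holds for any $w\in\widetilde W$. If $j\notin\{p,q\}$, evaluating $\rho_j$ on both sides of $w(\alpha_j)=\alpha_k$ gives $1=\rho_j(\alpha_j)=\rho_j(\alpha_k)=0$, a contradiction, so $j\in\{p,q\}$. Running the same argument with $w^{-1}(\alpha_k)=\alpha_j$ yields $k\in\{p,q\}$, hence $\{j,k\}=\{p,q\}$ and $\widetilde W=\langle w_j,w_k\rangle$, proving (4).

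Since $\widetilde W$ preserves the span of $\alpha_j$ and $\alpha_k$ and acts there through the rank--$2$ Cartan matrix $A_{(j,k)}$, I reduce to classifying rank--$2$ systems in which some Weyl element sends $\alpha_j$ to $\alpha_k$. If $a_{jk}=a_{kj}=0$, the reflections $w_j,w_k$ commute and each fixes the other's simple root, so $w(\alpha_j)\in\{\pm\alpha_j\}$, incompatible with $w(\alpha_j)=\alpha_k$. If $a_{jk}a_{kj}\geq 4$ and $A_{(j,k)}$ is symmetric, Lemma~\ref{lem2} directly excludes this possibility. The remaining cases (the finite types $B_2$, $C_2$, $G_2$ and the non-symmetric rank--$2$ matrices with $a_{jk}a_{kj}\geq 4$) are ruled out by lengths: every rank--$2$ generalized Cartan matrix is symmetrizable with symmetrizers satisfying $q_j a_{jk}=q_k a_{kj}$, and $a_{jk}\neq a_{kj}$ forces $q_j\neq q_k$, so $(\alpha_j\mid\alpha_j)=2q_j\neq 2q_k=(\alpha_k\mid\alpha_k)$, contradicting $W$--invariance of the form. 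This leaves $A_{(j,k)}=A_2$, proving (1); and from $a_{jk}=a_{kj}=-1$ it follows that vertices $j,k$ are joined by a single unlabeled edge in $\mathcal D$, hence are adjacent in $\mathcal D_*$, proving (2).

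To finish, I directly compute the orbit of $\alpha_j$ under $W(A_2)\cong S_3$: the six elements $1,w_j,w_k,w_jw_k,w_kw_j,w_jw_kw_j$ send $\alpha_j$ respectively to $\alpha_j$, $-\alpha_j$, $\alpha_j+\alpha_k$, $\alpha_k$, $-\alpha_j-\alpha_k$, $-\alpha_k$, so only $w_jw_k$ satisfies $w(\alpha_j)=\alpha_k$. Since $w$ is assumed reduced in $\widetilde W=\langle w_j,w_k\rangle$ and $w_jw_k$ is itself already reduced (lengths $1$ and $w_j$, $w_k$ having been eliminated in the enumeration above), this pins down $w=w_jw_k$, proving (3).

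The main obstacle is the rank--$2$ case analysis, in particular the non-symmetric subcase $a_{jk}\neq a_{kj}$ with $a_{jk}a_{kj}\geq 4$: Lemma~\ref{lem2} is stated only for symmetric $A_{(j,k)}$, so I need the length--preservation argument via the symmetrizability of $A_{(j,k)}$ itself (which is automatic in rank~$2$), rather than of the ambient $A$. Making this invariant--form argument airtight inside a possibly non-symmetrizable ambient generalized Cartan matrix is the main technical delicacy; if one is uncomfortable relying on the rank--$2$ form when $A$ itself is not symmetrizable, the fall-back is to mimic the coordinate--monotonicity proof of Lemma~\ref{lem1} and Lemma~\ref{lem2} directly for non-symmetric $A_{(j,k)}$.
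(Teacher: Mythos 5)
Your proof is correct, and for part (1) it follows the same skeleton as the paper's: a direct orbit computation when $a_{jk}=a_{kj}=0$, Lemma~\ref{lem2} for the symmetric case $a\ge 2$, and a length (invariant-form) argument to exclude the non-symmetric rank-$2$ matrices, with (2) then immediate. Where you genuinely diverge is in the treatment of (3), (4) and the logical order. The paper proves (3) only by verifying that $w_jw_k(\alpha_j)=\alpha_k$ and then deduces (4) from (3) rather tersely; you instead establish (4) first and independently, via Lemma~\ref{lem3} (any $w$ in a subgroup generated by $w_p,w_q$ freezes all coordinates $\rho_i$ with $i\notin\{p,q\}$, forcing $\{j,k\}=\{p,q\}$), and you obtain (3) by enumerating the six elements of $\langle w_j,w_k\rangle\cong S_3$, which actually pins $w$ down uniquely rather than merely exhibiting one element that works. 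Your ordering buys something concrete: the paper's Case 1 computation of $\widetilde W\{\alpha_j\}$ already presumes $\widetilde W=\langle w_j,w_k\rangle$, which it only asserts later in (4), and your coordinate argument removes that circularity. Likewise, your observation that the length argument should be run with the invariant form of the (always symmetrizable) rank-$2$ submatrix $A_{(j,k)}$ on $\mathrm{span}(\alpha_j,\alpha_k)$, on which $\widetilde W$ acts by the rank-$2$ reflection formulas, makes the exclusion of $B_2$, $C_2$, $G_2$ and the non-symmetric indefinite matrices valid even when the ambient $A$ is not symmetrizable --- a hypothesis the paper never states but implicitly uses when it says ``$W$ preserves root length.'' So the two arguments share the rank-$2$ case analysis at their core, but yours is tighter on exactly the points where the paper is hand-wavy.
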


\begin{proof}

(1)  Since $W$ preserves root length, and $\alpha_k$ is a $W$--translate of $\alpha_j$, $A_{(j,k)}$ must be symmetric, else $\alpha_j$, $\alpha_k$ are of different lengths.  Thus we have
\[
A_{(j,k)} =
\left(
\begin{array}{cc}
2 & -a \\
-a & 2 \end{array}
\right),
\]

 where $a=-a_{jk}=-a_{kj}\in\mathbb{Z}_{\ge 0}$, and $a\ge 0$.  Assume $a\not=1$.

 \textit{Case 1}: If $a=0$ then $A_{(j,k)}$ is the Cartan matrix associated to $A_1 \times A_1$. Calculating the $\widetilde{W}-$orbit of $\alpha_j$ we have
$$w_k\alpha_j=\alpha_j-(\langle\alpha_j,\alpha_k^\vee\rangle)\alpha_k=\alpha_j-(a)\alpha_k=\alpha_j,$$
$$w_j\alpha_j=-\alpha_j,$$
$$w_k(-\alpha_j)=-w_k(\alpha_j)=-\alpha_j,$$
$$w_j(-\alpha_j)=\alpha_j,$$
and thus, $\widetilde{W}\{\alpha_j\}=\{\pm\alpha_j\}$, which does not contain $\alpha_k$, a contradiction.

\textit{Case 2}: If $a\ge 2$, then $A_{(j,k)}$ is of affine or hyperbolic type, so by Lemma ~\ref{lem2}, $W\{\alpha_j\}\cap W\{\alpha_k\}=\emptyset$, contradicting the existence of $w\in\widetilde{W}$ where $w(\alpha_j)=\alpha_k$.

 Thus, $a$ must equal 1, and so $A_{(j,k)} = A_2$.

(2)  By part (1), $\alpha_j$ and $\alpha_k$ span an $A_2$ root subsystem.  Thus vertices $j$ and $k$ are adjacent in $\mathcal{D}_*$ and thus in $\mathcal{D}$ (Figure~\ref{adj}).

\begin{figure}[h!]
\includegraphics[width=1 in]{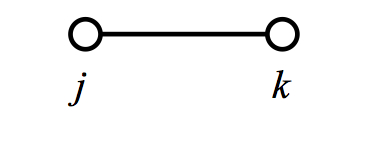}
  \caption{ Adjacent vertices $j$ and $k$  in $\mathcal{D}_*$ }
  \label{adj}
\end{figure}

(3) Recalling from Lemma ~\ref{lem1} that $w_i\alpha_j=\alpha_j+a\alpha_i$ and using $a=1$, we have
\begin{align*}
w_jw_k\alpha_j&=
%=w_j(\alpha_j-\langle\alpha_j,\alpha_k^\vee\rangle\alpha_k)\\  %correct
%&=w_j(\alpha_j-a_{jk}\alpha_k)\\  %correct
%&=w_j(\alpha_j-a\alpha_k)\\  %correct
%&=
w_j(\alpha_j+\alpha_k)\\  %correct
&=\alpha_j+\alpha_k-\langle\alpha_j+\alpha_k,\alpha_j^\vee\rangle\alpha_j\\  %correct
&=\alpha_j+\alpha_k-\left(\langle\alpha_j,\alpha_j^\vee\rangle+\langle\alpha_k,\alpha_j^\vee\rangle\right)\alpha_j\\  %correct
&=\alpha_j+\alpha_k-(a_{jj}+a_{kj})\alpha_j\\%correct
&=\alpha_j+\alpha_k-(2-1)\alpha_j\\ %correct
&=\alpha_k .%correct
\end{align*}

(4) By part (3) we see that $w_j, w_k \in \widetilde{W}.$  By assumption $\widetilde{W}$ is generated by two simple root reflections, thus $\widetilde{W}= \langle w_j, w_k\rangle$.

\end{proof}

We now obtain a strengthening of Tits' lemma as a corollary.

\begin{corollary}\label{newtits}
Let $\mathcal{D}$ be a Dynkin diagram, $W$ its associated Weyl group, and $\Pi$ the set of simple roots indexed over a set $I$.  Let $w\in W$ and $j, k\in I$  be such that $w\alpha_j=\alpha_k$.  Then there exists a sequence $\{j=i_1, i_2, \ldots, i_m=k\}\subset I$ such that 
$$w=(w_{i_{m-1}}w_k)(w_{i_{m-2}}w_{i_{m-1}})(w_{i_{m-3}}w_{i_{m-2}})\cdots(w_{i_2}w_{i_3})(w_jw_{i_2}),$$
and $i_t, i_{t+1}$ are adjacent in $\mathcal{D}_{\ast}$ for all $t\in\{1,\ldots m-1\}$. Thus, $j$ and $k$ are connected by a path of single edges in $\mathcal{D}$.
\end{corollary}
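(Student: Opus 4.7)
The plan is to apply Tits' Lemma 13.31 to produce the basic sequence $\{j=i_1,i_2,\ldots,i_m=k\}$ and elements $u_t\in\langle w_{i_t},w_{s_t}\rangle$ with $u_t(\alpha_{i_t})=\alpha_{i_{t+1}}$ and $w=u_{m-1}u_{m-2}\cdots u_1$, and then feed each individual step into Theorem~\ref{thm1} to pin down the unknowns $s_t$ and $u_t$ exactly. Concretely, for each $t\in\{1,\ldots,m-1\}$ we regard $\widetilde{W}^{(t)}:=\langle w_{i_t},w_{s_t}\rangle$ as a subgroup of $W$ generated by two simple root reflections (dropping the trivial case $i_t=i_{t+1}$, where $u_t$ may be taken to be the identity and suppressed from the list). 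The nontrivial element $u_t$ satisfies $u_t(\alpha_{i_t})=\alpha_{i_{t+1}}$, so Theorem~\ref{thm1} applies with $(j,k)=(i_t,i_{t+1})$.

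From Theorem~\ref{thm1}(1)--(2) we conclude that $A_{(i_t,i_{t+1})}=A_2$ and that vertices $i_t,i_{t+1}$ are adjacent in $\mathcal{D}_\ast$; from Theorem~\ref{thm1}(4) we get $\widetilde{W}^{(t)}=\langle w_{i_t},w_{i_{t+1}}\rangle$, which forces $s_t=i_{t+1}$ (since $\widetilde{W}^{(t)}$ is generated by exactly two simple reflections); and from Theorem~\ref{thm1}(3) we obtain the explicit formula $u_t=w_{i_t}w_{i_{t+1}}$. Substituting these identifications into the decomposition of $w$ provided by Tits' lemma yields
\[
w=u_{m-1}u_{m-2}\cdots u_1=(w_{i_{m-1}}w_{i_m})(w_{i_{m-2}}w_{i_{m-1}})\cdots(w_{i_1}w_{i_2}),
\]
which, after recalling $i_1=j$ and $i_m=k$, is precisely the product formula in the statement.

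Finally, since $i_t$ and $i_{t+1}$ are adjacent in $\mathcal{D}_\ast$ for every $t$, the sequence $\{j=i_1,i_2,\ldots,i_m=k\}$ describes a path in $\mathcal{D}_\ast$, and by the definition of $\mathcal{D}_\ast$ (single edges of $\mathcal{D}$ only) this is a path of single edges in the original Dynkin diagram $\mathcal{D}$ connecting $j$ to $k$.

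The core work has already been done in Theorem~\ref{thm1}; the only genuine obstacle here is the bookkeeping required to ensure that Theorem~\ref{thm1} may legitimately be applied to each $u_t$. This comes down to verifying that $\widetilde{W}^{(t)}$ really is generated by two \emph{distinct} simple reflections. The only way this could fail is if $w_{i_t}=w_{s_t}$, in which case $u_t$ fixes $\alpha_{i_t}$ and so $i_{t+1}=i_t$; we may then simply delete the step from the sequence and the product without altering $w$, which keeps the final path in $\mathcal{D}_\ast$ a genuine (though shorter) path of single edges between $j$ and $k$.
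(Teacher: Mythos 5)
Your proof is correct and follows essentially the same route as the paper: invoke Tits' Lemma 13.31 for the sequence and the factors $u_t\in\langle w_{i_t},w_{s_t}\rangle$, then apply Theorem~\ref{thm1} to each step to force $s_t=i_{t+1}$, $u_t=w_{i_t}w_{i_{t+1}}$, and adjacency of $i_t,i_{t+1}$ in $\mathcal{D}_\ast$, and substitute into the product. Your explicit handling of the degenerate case $i_{t+1}=i_t$ (deleting trivial steps) is a small extra precaution the paper leaves implicit, but it does not change the argument.
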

\begin{proof} By the lemma of Tits, we have a sequence of Weyl group elements $u_1, u_2, \ldots,u_{m-1}$ and a sequence $\{s_1, \ldots, s_{m-1}\}$ such that
$$w=u_{m-1}\cdots u_2u_1$$
where 
\begin{enumerate}
	\item  $u_t(\alpha_{i_t})=\alpha_{i_{t+1}}$ and
	\item  $u_t \in \langle w_{i_t}, w_{s_t}\rangle.$ 
\end{enumerate}
The description of $w_{s_t}$ here is made clear by applying Theorem~\ref{thm1}. The last two conditions together imply that $s_t=i_{t+1}$. Moreover, they imply that for all $t\in\{1,\ldots m-1\}$, $i_t$ and $i_{t+1}$ are adjacent, and $u_t=(w_{i_t} w_{i_{t+1}})$.  Thus we may rewrite the Weyl group element $w$ as 
$$w=(w_{i_{m-1}}w_k)(w_{i_{m-2}}w_{i_{m-1}})(w_{i_{m-3}}w_{i_{m-2}})\cdots(w_{i_2}w_{i_3})(w_jw_{i_2}),$$
and so $j$ and $k$ are connected by a path $\{j=i_1, i_2, \ldots, i_m=k\}$ in $\mathcal{D}_{\ast}$. Thus they are in the same connected component. 
\end{proof}

 Let  $\mathcal{D}^J_{\ast}$ denote a maximal connected subdiagram (connected component) of $\mathcal{D}_{\ast}$ whose vertices are indexed by $J\subseteq I$. In strengthening the lemma of Tits, we now make clear that the sequence $\{i_1,\ldots, i_m\}$ indeed corresponds to a sequence of vertices in $D_*^J$ for some $J$, where each pair $i_t,i_{t+1}$ is connected by a single edge.  Thus the condition that $\alpha_j$ and $\alpha_k$ lie in the same Weyl group orbit is sufficient for vertices $j, k \in \mathcal{D}$ to be connected by a path of single edges.  Now we show that this condition is also necessary.

Let $\mathcal{D}$ be a Dynkin diagram, $\mathcal{D}_*$ its skeleton, $\Pi=\{\alpha_1,\alpha_2,\ldots,\alpha_{\ell}\}$ a choice of simple roots and $W$ the Weyl group.

\begin{lemma} \label{chain} Suppose vertices $i$ and $j$ are adjacent in $\mathcal{D}_{\ast}$. Then $\alpha_i=w_jw_i\alpha_j$, where $w_j$ is the simple root reflection corresponding to $\alpha_j$. \end{lemma}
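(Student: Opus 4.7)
The plan is a direct computation, essentially mirroring the calculation carried out in part (3) of Theorem~\ref{thm1}, but with roles of the two simple roots swapped. The key input is translating the graph-theoretic hypothesis into an entry condition on the generalized Cartan matrix.

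First I would unpack the hypothesis. By the definition of the skeleton $\mathcal{D}_{\ast}$ given at the start of this section, vertices $i$ and $j$ are adjacent in $\mathcal{D}_{\ast}$ precisely when they are connected by a single edge with no arrows or labels in $\mathcal{D}$, which in turn is equivalent to $a_{ij} = a_{ji} = -1$. In particular the rank $2$ submatrix $A_{(i,j)}$ is the Cartan matrix of type $A_2$, and the two simple roots $\alpha_i, \alpha_j$ have the same length.

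Next I would just compute $w_j w_i \alpha_j$ using the simple reflection formula $w_r(\alpha_s) = \alpha_s - \langle \alpha_s,\alpha_r^{\vee}\rangle \alpha_r = \alpha_s - a_{sr}\alpha_r$. Applying $w_i$ first and using $a_{ji} = -1$ gives
\[
w_i(\alpha_j) \;=\; \alpha_j - a_{ji}\alpha_i \;=\; \alpha_j + \alpha_i.
\]
Then applying $w_j$, using $w_j(\alpha_j) = -\alpha_j$ (noted after the definition of the simple root reflection) together with $w_j(\alpha_i) = \alpha_i - a_{ij}\alpha_j = \alpha_i + \alpha_j$ since $a_{ij} = -1$, I obtain
\[
w_j w_i(\alpha_j) \;=\; w_j(\alpha_j) + w_j(\alpha_i) \;=\; -\alpha_j + (\alpha_i + \alpha_j) \;=\; \alpha_i,
\]
which is the desired identity.

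There is no real obstacle here: the entire content is the adjacency-to-matrix translation, after which the identity reduces to a two-line linear-algebra check inside the $A_2$ root subsystem spanned by $\alpha_i$ and $\alpha_j$. The main subtlety to be careful about is keeping the indices in $a_{ij}$ versus $a_{ji}$ straight when applying the reflection formula, though since $\mathcal{D}_{\ast}$--adjacency forces symmetry $a_{ij} = a_{ji}$ this does not actually cause any trouble.
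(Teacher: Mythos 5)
Your proof is correct and follows essentially the same route as the paper: translate $\mathcal{D}_{\ast}$--adjacency into $a_{ij}=a_{ji}=-1$ and then check $w_jw_i\alpha_j=\alpha_i$ by the two-step reflection computation, which is exactly the calculation the paper invokes from the proof of Theorem~\ref{thm1} (there done in part (3), though the paper cites part (4)). The only difference is that you redo the short computation explicitly rather than citing it, which is immaterial.
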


\begin{proof}Since $i$ and $j$ are adjacent in $\mathcal{D}_{\ast}$, there is a single edge between $i$ and $j$ in $\mathcal{D}$, and hence\newline  $a_{ij}=a_{ji}=-1$ in the generalized Cartan matrix $A$.  The proof of Theorem ~\ref{thm1} part$(4)$ has already established that if $a_{ij}=a_{ji}=-1$, then $w_jw_i$ maps $\alpha_j$ to $\alpha_i$.
\end{proof}

 The following corollary is immediate.

\begin{corollary} \label{chaincor} If $i_1, i_2,\dots,i_m$ are the vertices of a path in $\mathcal{D}_{\ast}$, then
\begin{enumerate}
\item  There exists $w\in \langle w_{i_1},\dots , w_{i_m}\rangle$ such that  $\alpha_{i_m}=w\alpha_{i_1}$.
\item We have $a_{i_1i_2}\ =\ a_{i_2i_3}\ =\ \dots\ =\ a_{i_{m-1}i_m}\ =\ a_{i_2i_1}\ =\ a_{i_3i_2}\ =\ \dots\ =\ a_{i_mi_{m-1}}\ =\ -1.$
\end{enumerate}
\end{corollary}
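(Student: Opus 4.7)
The plan is to derive (2) directly from the definition of the skeleton $\mathcal{D}_\ast$, and to derive (1) by induction on the path length $m$ using Lemma~\ref{chain} as the inductive step.

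For (2), recall that an edge $\{i_t,i_{t+1}\}$ appears in $\mathcal{D}_\ast$ precisely when $i_t$ and $i_{t+1}$ are joined in $\mathcal{D}$ by a single, unlabeled, unoriented edge. By the construction of the Dynkin diagram from the generalized Cartan matrix $A$, this happens if and only if $a_{i_t i_{t+1}} = a_{i_{t+1} i_t} = -1$. Since by hypothesis every consecutive pair $i_t, i_{t+1}$ along the path is an edge of $\mathcal{D}_\ast$, we get the full chain of equalities in (2).

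For (1), I would induct on $m$. The case $m=2$ is exactly Lemma~\ref{chain}: adjacency of $i_1$ and $i_2$ in $\mathcal{D}_\ast$ gives $\alpha_{i_2} = w_{i_2} w_{i_1} \alpha_{i_1}$, with $w_{i_2} w_{i_1} \in \langle w_{i_1}, w_{i_2}\rangle$. For the inductive step, assume the claim holds for paths of length $m-1$. Given a path $i_1, i_2, \ldots, i_m$ in $\mathcal{D}_\ast$, the truncated path $i_1,\ldots, i_{m-1}$ yields (by induction) an element $w' \in \langle w_{i_1}, \ldots, w_{i_{m-1}}\rangle$ with $\alpha_{i_{m-1}} = w'\alpha_{i_1}$. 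Since $i_{m-1}$ and $i_m$ are adjacent in $\mathcal{D}_\ast$, Lemma~\ref{chain} provides $\alpha_{i_m} = w_{i_m} w_{i_{m-1}} \alpha_{i_{m-1}}$. Composing,
\[
\alpha_{i_m} \;=\; w_{i_m} w_{i_{m-1}} w' \, \alpha_{i_1},
\]
and $w := w_{i_m} w_{i_{m-1}} w' \in \langle w_{i_1}, \ldots, w_{i_m}\rangle$ is the desired element.

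There is no real obstacle here; both parts follow immediately from Lemma~\ref{chain} together with the definition of the skeleton $\mathcal{D}_\ast$. The only thing to watch is bookkeeping of the generating set in the induction, to ensure the produced $w$ lives in the subgroup generated by the simple reflections indexed by vertices of the path and not any larger subgroup.
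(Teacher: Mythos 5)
Your overall route is the same as the paper's: the paper treats this corollary as immediate from Lemma~\ref{chain} (part (i) by iterating the lemma along the path) and from the definition of the skeleton (part (ii), since adjacency in $\mathcal{D}_{\ast}$ means exactly $a_{ij}=a_{ji}=-1$), and your induction on $m$ is just a careful write-up of that. Your argument for (ii) is fine.

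In part (i), however, you transcribe Lemma~\ref{chain} with the two reflections in the wrong order. The lemma (equivalently Theorem~\ref{thm1}, part (3)) says that when $i_t$ and $i_{t+1}$ are adjacent in $\mathcal{D}_{\ast}$, the element carrying $\alpha_{i_t}$ to $\alpha_{i_{t+1}}$ is $w_{i_t}w_{i_{t+1}}$, i.e. $\alpha_{i_{t+1}}=w_{i_t}w_{i_{t+1}}\alpha_{i_t}$; compare the factors $(w_{i_t}w_{i_{t+1}})$ appearing in Corollary~\ref{newtits}. Your base case $\alpha_{i_2}=w_{i_2}w_{i_1}\alpha_{i_1}$ is false: $w_{i_1}\alpha_{i_1}=-\alpha_{i_1}$, and then $w_{i_2}(-\alpha_{i_1})=-\alpha_{i_1}-\alpha_{i_2}$ (using $a_{i_1i_2}=a_{i_2i_1}=-1$), not $\alpha_{i_2}$; the same order error appears in your inductive step $\alpha_{i_m}=w_{i_m}w_{i_{m-1}}\alpha_{i_{m-1}}$ and in the composite $w=w_{i_m}w_{i_{m-1}}w'$. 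Since the corollary only asserts the existence of some $w\in\langle w_{i_1},\dots,w_{i_m}\rangle$, the proof is repaired simply by reversing each pair, taking $w=(w_{i_{m-1}}w_{i_m})\,w'$ with $w'$ from the inductive hypothesis, which still lies in the stated subgroup; so the slip is local, but the displayed identities as you wrote them do not hold.
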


\begin{theorem} \label{necessary} Let $\mathcal{D}$ be a Dynkin diagram, let $\Pi=\{\alpha_1,\dots ,\alpha_{\ell}\}$ be the simple roots of the corresponding root system and let $W$ denote the Weyl group. Let  $\mathcal{D}^J_{\ast}$ denote a maximal connected subdiagram  of $\mathcal{D}_{\ast}$ whose vertices are indexed by $J\subseteq I$. Then

 (1) For all $i,j\in J$ there exists $w\in {W}$ such that $\alpha_i=w\alpha_j$, thus ${W}\alpha_i={W}\alpha_j$.

 (2) If $j\in J$ and $k \notin  J$, we have ${W}\alpha_j\cap {W}\alpha_k=\varnothing$.

\end{theorem}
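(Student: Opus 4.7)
For part (1), the plan is to invoke Corollary~\ref{chaincor}(1) directly. Since $\mathcal{D}_*^J$ is connected by hypothesis, any two indices $i,j \in J$ are joined by a path $i = i_1, i_2, \ldots, i_m = j$ in $\mathcal{D}_*^J \subseteq \mathcal{D}_*$. Corollary~\ref{chaincor}(1) then produces a $w \in \langle w_{i_1}, \ldots, w_{i_m}\rangle \leq W$ with $w\alpha_i = \alpha_j$, from which $W\alpha_i = W\alpha_j$ follows immediately.

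For part (2) I would argue by contradiction, leaning entirely on the strengthened Tits lemma (Corollary~\ref{newtits}). Suppose $j \in J$ and $k \notin J$ but $W\alpha_j \cap W\alpha_k \neq \varnothing$; then there exists $w \in W$ with $w\alpha_j = \alpha_k$. Applying Corollary~\ref{newtits} to $w$, we obtain a sequence of indices $j = i_1, i_2, \ldots, i_m = k$ such that each consecutive pair $i_t, i_{t+1}$ is adjacent in $\mathcal{D}_*$. In particular, $j$ and $k$ lie in the same connected component of $\mathcal{D}_*$. Since $j \in J$ and $\mathcal{D}_*^J$ is defined as a \emph{maximal} connected subdiagram, this connected component is precisely $\mathcal{D}_*^J$, forcing $k \in J$ and contradicting the assumption $k \notin J$.

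Both parts are essentially bookkeeping arguments once the main technical results (Theorem~\ref{thm1} and its Corollary~\ref{newtits}) are in place. The only subtlety I would want to be careful about is confirming that maximality of $\mathcal{D}_*^J$ is what converts ``$j$ and $k$ are in some common connected component'' into ``$k \in J$'' in part (2); this is just the observation that distinct maximal connected subdiagrams of $\mathcal{D}_*$ are disjoint. There is no genuine obstacle here: the heavy lifting has already been carried out in the proof of Corollary~\ref{newtits}, where the sharp form of Tits' lemma guarantees that the intermediate vertices $i_2, \ldots, i_{m-1}$ can be chosen so that each consecutive pair is joined by a \emph{single} edge in $\mathcal{D}$, not merely by an edge of arbitrary multiplicity.
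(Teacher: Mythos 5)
Your proposal is correct and follows essentially the same route as the paper: part (1) is obtained from Corollary~\ref{chaincor}(1) applied to a path in the connected component $\mathcal{D}_*^J$, and part (2) is a proof by contradiction using the strengthened Tits lemma (Corollary~\ref{newtits}) to produce a single-edge path from $j$ to $k$, forcing $k\in J$. Your extra remark about maximality of $\mathcal{D}_*^J$ being what converts ``same connected component'' into ``$k\in J$'' is exactly the (implicit) step in the paper's argument as well.
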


\begin{proof} (1) Since $i,j\in J$ , there exists a connected path of vertices $\{i=i_1, i_2, \ldots, i_{m-1}, i_m=j\}$ in $\mathcal{D}^J_{\ast}$ and a corresponding subgroup $\langle w_{i_1},\dots , w_{i_m}\rangle \subset W$.  Thus we may use Corollary ~\ref{chaincor}$(i)$, and the result follows.

 (2) Assume $j\in J$ and $k\notin J$, so $j,k$ are not in the same connected component.  Assume that $W\{\alpha_j\}=W\{\alpha_k\}$.  Then there exists a $w\in W$ such that $w\alpha_j=\alpha_k$.  By Corollary ~\ref{newtits}, there exists a sequence $\{j=i_1, i_2, \ldots, i_m=k\}\subset I$ such that 
$$w=(w_{i_{m-1}}w_k)(w_{i_{m-2}}w_{i_{m-1}})(w_{i_{m-3}}w_{i_{m-2}})\cdots(w_{i_2}w_{i_3})(w_jw_{i_2}),$$
and $i_t, i_{t+1}$ are adjacent in $\mathcal{D}_{\ast}$ for all $t\in\{1,\ldots m-1\}$.  Thus there exists a path of vertices $\{j=i_1, i_2, \ldots, i_m=k\}$ connecting $j$ and $k$.  But then $k\in J$, which is a contradiction.  Therefore ${W}\alpha_j\cap {W}\alpha_k=\varnothing$.
\end{proof}

 With Theorem ~\ref{necessary} and Corollary ~\ref{newtits}, we now have the necessary and sufficient conditions for two distinct simple roots to be in the same Weyl group orbit.  This result is summarized in the following corollary, which is immediate.
      
\begin{corollary} \label{maincor}  Let $\mathcal{D}$ be a Dynkin diagram, let $\Pi=\{\alpha_1,\dots ,\alpha_{\ell}\}$ be the simple roots of the corresponding root system and let $W$ denote the Weyl group. Then for $i\neq j$, the simple roots $\alpha_i$ and $\alpha_j$ are in the same $W$--orbit if and only if vertices $i$ and $j$  in the Dynkin diagram corresponding to $\alpha_i$ and $\alpha_j$ are connected by a path consisting only of single edges.
\end{corollary}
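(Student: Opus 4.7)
The plan is to observe that the biconditional in Corollary~\ref{maincor} has already been established as two separate implications in the preceding results, and all that remains is to assemble them. I would prove the forward and backward implications in turn, both by direct appeal to previous theorems.

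For the forward implication, I would assume that $\alpha_i$ and $\alpha_j$ lie in the same $W$--orbit, so there exists $w \in W$ with $w\alpha_j = \alpha_i$. Applying Corollary~\ref{newtits} (the sharpened form of Tits' lemma) directly, I obtain a sequence of indices $\{j = i_1, i_2, \ldots, i_m = i\} \subseteq I$ such that consecutive vertices $i_t, i_{t+1}$ are adjacent in $\mathcal{D}_*$ for each $t \in \{1, \ldots, m-1\}$. By the definition of $\mathcal{D}_*$, adjacency means exactly that $i_t$ and $i_{t+1}$ are joined in $\mathcal{D}$ by a single edge with no arrow or label (equivalently, $a_{i_t i_{t+1}} = a_{i_{t+1} i_t} = -1$). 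Concatenating these single edges yields the required path between $i$ and $j$ in $\mathcal{D}$.

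For the backward implication, I would suppose that $i$ and $j$ are connected in $\mathcal{D}$ by a path consisting only of single edges. Then, by definition, $i$ and $j$ lie in a common maximal connected subdiagram $\mathcal{D}^J_*$ of $\mathcal{D}_*$, indexed by some $J \subseteq I$ containing both $i$ and $j$. Theorem~\ref{necessary}(1) then asserts precisely that $W\alpha_i = W\alpha_j$, completing this direction.

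Since both implications reduce to invocations of results already in hand, there is no substantive new obstacle in this final step; the real work was done in proving Theorem~\ref{thm1}, Corollary~\ref{newtits}, and Theorem~\ref{necessary}, where the interplay between rank~2 subsystem analysis (via Lemmas~\ref{lem1} and~\ref{lem2}) and the coordinate invariance of Lemma~\ref{lem3} was needed to pin down both the structure of $\widetilde{W}$ and the fact that disconnected components of $\mathcal{D}_*$ correspond to disjoint $W$--orbits.
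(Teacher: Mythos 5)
Your proposal is correct and follows exactly the route the paper intends: the paper states Corollary~\ref{maincor} is immediate from Corollary~\ref{newtits} (forward direction) and Theorem~\ref{necessary} (backward direction), which is precisely how you assemble the two implications. No gaps.
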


Moreover, we have the following stronger statement.

\begin{corollary} \label{realroots} Let $\mathcal{D}$ be a Dynkin diagram, let $\Pi=\{\alpha_1,\dots ,\alpha_{\ell}\}$ be the simple roots of the corresponding root system and let $W$ denote the Weyl group. Let  $\mathcal{D}^{J_1}_{\ast}$, $\mathcal{D}^{J_2}_{\ast}$, $\dots$,  $\mathcal{D}^{J_t}_{\ast}$ denote the connected subdiagrams of $\mathcal{D}_{\ast}$ whose vertices are indexed by $J_1,J_2,\dots ,J_t\subseteq I$. Then the set of real roots $\Phi=W\Pi$ is
$$\Phi=W\{\alpha_{J_1}\}\amalg W\{\alpha_{J_2}\}\amalg\dots \amalg W\{\alpha_{J_t}\},$$
where $\alpha_{J_s}$ denotes the subset of simple roots indexed by the subset $J_s \subseteq I$.
\end{corollary}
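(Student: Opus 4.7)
The plan is to recognize that this corollary is an immediate consequence of Corollary~\ref{maincor} (or equivalently Theorem~\ref{necessary}) paired with the tautology $\Phi=W\Pi$. Both assertions in the statement---that the displayed union exhausts $\Phi$ and that it is disjoint---should fall out with only a short argument, since all of the substantive combinatorics has already been carried out in the preceding results.

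First I would handle the covering. By definition of the connected components of $\mathcal{D}_{\ast}$, the index set partitions as $I = J_1 \sqcup J_2 \sqcup \dots \sqcup J_t$, and consequently $\Pi = \alpha_{J_1} \sqcup \alpha_{J_2} \sqcup \dots \sqcup \alpha_{J_t}$ as a disjoint union of subsets of simple roots. Applying the Weyl group and using $\Phi = W\Pi$ yields
$$\Phi \;=\; W\{\alpha_{J_1}\} \,\cup\, W\{\alpha_{J_2}\} \,\cup\, \dots \,\cup\, W\{\alpha_{J_t}\}$$
as a set-theoretic union. (Note that within each piece, Theorem~\ref{necessary}(1) actually collapses $W\{\alpha_{J_s}\}$ to a single $W$--orbit, namely $W\alpha_i$ for any $i \in J_s$, a remark worth recording since it is what justifies the notation $W\{\alpha_{J_s}\}$ being treated as a single orbit.)

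Second I would establish disjointness by contradiction. Suppose $\beta \in W\{\alpha_{J_r}\} \cap W\{\alpha_{J_s}\}$ for indices $r \neq s$. Then there exist simple roots $\alpha_j$ with $j \in J_r$ and $\alpha_k$ with $k \in J_s$, together with $u, v \in W$, such that $u\alpha_j = \beta = v\alpha_k$; consequently $(v^{-1}u)\alpha_j = \alpha_k$. By Corollary~\ref{maincor}, $\alpha_j$ and $\alpha_k$ lying in the same $W$--orbit forces the vertices $j$ and $k$ to be joined by a path of single edges in $\mathcal{D}$, i.e., to lie in the same connected component of $\mathcal{D}_{\ast}$. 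This contradicts $r \neq s$, so the union is disjoint.

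There is no real obstacle here: the heavy lifting---the strengthened form of Tits' lemma (Corollary~\ref{newtits}), the rank~2 analysis (Lemmas~\ref{lem1},~\ref{lem2}, Theorem~\ref{thm1}), and the necessity direction (Theorem~\ref{necessary})---has already been carried out, and the present corollary merely repackages that content as a statement about the partition of $\Phi$ into $W$--orbits indexed by the connected components of the skeleton $\mathcal{D}_{\ast}$. The only thing to be careful about is not to conflate the number of connected components with the number of $W$--orbits on real roots without first invoking Theorem~\ref{necessary}(1) to confirm that each piece $W\{\alpha_{J_s}\}$ is a single orbit.
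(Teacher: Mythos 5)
Your proposal is correct and matches the paper's intent: the paper states this corollary as an immediate consequence of Theorem~\ref{necessary} and Corollary~\ref{newtits}/\ref{maincor}, and your argument (partition of $I$ by components of $\mathcal{D}_{\ast}$ plus $\Phi=W\Pi$ for the covering, Theorem~\ref{necessary}(1) to see each $W\{\alpha_{J_s}\}$ is a single orbit, and \ref{maincor} or equivalently Theorem~\ref{necessary}(2) for disjointness) is exactly the intended derivation. No gaps.
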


 Given any Dynkin diagram $\mathcal{D}$, we can therefore determine the disjoint orbits of the Weyl group on real roots by determining the skeleton $\mathcal{D}_{\ast}$. Applying the classification of hyperbolic Dynkin diagrams (\cite{CCCMNNP}, \cite{KM}, \cite{Li}, \cite{Sa}), it follows that maximal number of disjoint Weyl group orbits in  a hyperbolic root system is 4.

\section{A Cayley graph encoding the Weyl group action on real roots}

Let $A$ be an $\ell\times\ell$ generalized Cartan matrix with Kac--Moody  algebra $\mathfrak{g}=\mathfrak{g}(A)$, root system $\Delta=\Delta(A)$, real roots $\Phi\subseteq\Delta$, and Weyl group $W=W(A)$. Let $\Pi=\{\alpha_1,\ldots,\alpha_{\ell}\}$ denote a fixed base of $\Delta$. Using only the action of the simple root reflections on $\Phi$, we define a graph $\mathcal{P}$ associated to $\Phi$, relative to $\Pi$, which has the following properties:
\begin{enumerate}
\item The vertices of $\mathcal{P}$ are in one-to-one correspondences with the elements of $\Phi$.
\item The number of connected components of $\mathcal{P}$ equals the number of disjoint orbits of $W$ on $\Phi$, and $W$ acts transitively on  each connected component of $\mathcal{P}$.
\item The vertices of $\mathcal{P}$ in a given connected component are in 1--to--1 correspondence with the roots in the corresponding Weyl orbit.
\item The number of orbits of $W$ on $\Phi$ (equivalently, the number of connected components of $\mathcal{P}$) can be determined from the generalized Cartan matrix $A$ (equivalently from the Dynkin diagram $\mathcal{D}$ of $A$).
\end{enumerate}

\medskip\noindent
The graph ${\mathcal{P}}$ is defined as follows.

Let $A$ be a (generalized) Cartan matrix, $A=(A_{ij})_{i,j=1,2,...,l}$.
Let $\Pi$ be the root basis for $A$, ${W}$ be the Weyl group corresponding to $A$ and $\Phi={W}\Pi$ be the real roots of $A$. We define a labelled graph $(\mathcal{P},\nu)$ which encodes the action of $W$ on $\Phi$ where $\mathcal{P}=(V\mathcal{P},E\mathcal{P})$ is an unoriented graph (with vertices $V\mathcal{P}$ and edges $E\mathcal{P}$) and $\nu:E\mathcal{P}\rightarrow S$ is a labelling of a simple reflection to each edge of $\mathcal{P}$.  Vertices and edges of $\mathcal{P}$ are defined as follows:
\begin{align*}
&\textrm{Vertices}(\mathcal{P})=\Phi\qquad\text{(the set of real roots)}\\
&\textrm{Edges}(\mathcal{P})\subseteq\Phi\times\Phi
\end{align*}
where $\alpha$ and $\beta$ are adjacent in $\mathcal{P}$ if and only if
\[w_i\alpha=\beta\quad\textrm{for some simple reflection }w_i\textrm{ and}\]
\[\nu(e)=w_i\quad\textrm{where }e=(\alpha,\beta)\textrm{ is an edge of }\mathcal{P}.\]
\noindent We will customarily label the vertices with the corresponding real root expressed either as a linear combination of simple roots.

\medskip\noindent
It should be noted that, since every $w_i\in W$ is an involution, every edge in $\mathcal{P}$ represents a bigon (Figure~\ref{bigon}).

\begin{figure}[h!]
\includegraphics[width=.8 in]{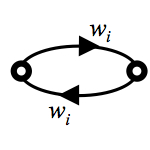}
\caption{The bigon $w_i^2=1$}
\label{bigon}
\end{figure} 

By definition of  $\mathcal{P}$, each orbit of $W$, respectively each connected component of  $\mathcal{P}$,  contains at least one simple root. Moreover  $\mathcal{P}$  is connected if and only if $W$ acts transitively on  $\Phi$, and $\mathcal{P}$ has no loops if and only if $W$ acts simply transitively on  connected components, that is, with no fixed roots.

Let $\mathcal{P}_j$, ($j=1,\ldots,s$) denote the connected components of $\mathcal{P}$. In the case that $W$ acts simply transitively on  the vertices of ${\mathcal {P}}$, there is a natural isomorphism from ${\mathcal {P}}$ to the Cayley graph of $W$. In fact, if $W$ acts simply transitively on  a connected component ${\mathcal {P}}_j$, then there is a natural isomorphism from ${\mathcal {P}}_j$ to the Cayley graph of $W$. There are important examples of finite, affine and hyperbolic root systems for which this criterion holds.

By (ii) the action of $W$ is transitive on each connected component ${\mathcal {P}}_j$. If, however, the action of $W$ is not simply transitive on some $\mathcal{P}_j$, then there is a bijective correspondence between the vertices of ${\mathcal {P}}_j$ and elements of the coset space $W/W_{\alpha_{i_j}}$, where $\alpha_{i_j}$ is a choice of simple root from the connected component ${\mathcal {P}}_j$ and  $W_{\alpha_{i_j}}$ denotes the subgroup of $W$ stabilizing $\alpha_{i_j}$, well defined up to conjugacy. This correspondence induces an isomorphism of ${\mathcal {P}}_j$ with a certain `orbifold' associated to the Cayley graph of $W$.

We may therefore consider the graph ${\mathcal {P}}$ as a generalization of the Cayley graph of $W$, namely we may view $\mathcal{P}$ as `the Cayley graph of the action of $W$ on $\Phi$ relative to $\Pi$.' This notion may be generalized further to associate a graph $\mathcal{P}$ to any action of a group $G$ on a set $X$ relative to a choice of generating set for $G$.

%%%%%%%%%%%%%%%%%%%%%%%

%%%%%%

\section{Some Finite, Affine and Hyperbolic Examples} \label{sec4}
Consider the Cartan matrix $A_1 = (2)$ for the $A_1$ root system. Then $\Phi=\{\alpha,-\alpha\}$, and $\mathcal{P}(A_1)$ is given in Figure~\ref{A1orbit}.

\begin{figure}[h!]
\includegraphics[width=1.4 in]{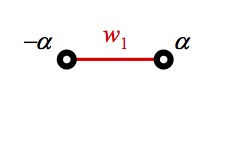}
\caption{Orbit of type $A_1$}
\label{A1orbit}
\end{figure}

For \[A_2=\left( \begin{array}{ccc} 2 & -1\\ -1 & 2\end{array}\right),\]  we have
$\Phi=\{\pm\alpha_1,\pm\alpha_2,\pm(\alpha_1+\alpha_2)\}$, and $\mathcal{P}(A_2)$ is given in Figure~\ref{A2orbit}.

\begin{figure}[h!]
\includegraphics[width=2.5 in]{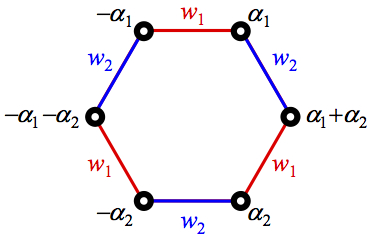}
\caption{Orbit of type $A_2$}
\label{A2orbit}
\end{figure}

$\mathcal{P}(A_2)$ illustrates that the Weyl group acts transitively on  all roots in $A_2$: $\Phi(A_2)=W\alpha_1=W\alpha_2$. 

An example in which $\mathcal{P}$ is disconnected is 
\[B_2=\left( \begin{array}{ccc} 2 & -2\\ -1 & 2\end{array}\right),\]  whose simple roots have two distinct lengths.  The Dynkin diagram is given in Figure~\ref{B2}.

\begin{figure}[h!]
\includegraphics[width=1 in]{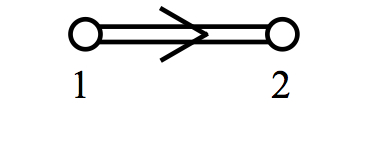}
\caption{Dynkin diagram of type $B_2$}
\label{B2}
\end{figure}
has skeleton as in Figure~\ref{B2skeleton}.

\begin{figure}[h!]
\includegraphics[width=1 in]{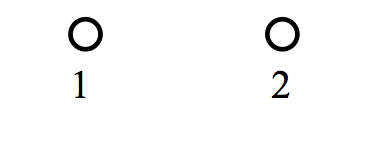}
\caption{The skeleton $\mathcal{D}_*(B_2)$}
\label{B2skeleton}
\end{figure}
thus by Corollary ~\ref{realroots}, 
$$\Phi(B_2)=W\alpha_1\amalg W\alpha_2.$$

The disjoint Weyl orbits of $B_2$, as well as the real roots which are fixed by simple root reflections, are demonstrated in $\mathcal{P}(B_2)$ which is given in Figure~\ref{B2orbit}

\begin{figure}[h!]
\includegraphics[width=2.5 in]{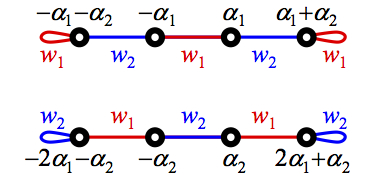}
\caption{Orbit of type $B_2$}
\label{B2orbit}
\end{figure}

The next example is of affine type.  Let \[A_1^{(1)}=\left(\begin{array}{cc} 2 & -2\\ -2 & 2\end{array}\right).\]  Its Dynkin diagram is given in Figure~\ref{affine}.

\begin{figure}[h!]
\includegraphics[width=1 in]{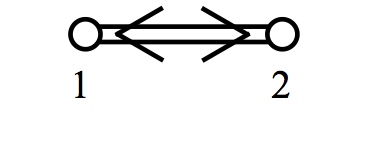}
\caption{Dynkin diagram of type $A_1^{(1)}$}
\label{affine}
\end{figure}
The skeleton $\mathcal{D}_*(A_1^{(1)})$ is given in Figure~\ref{affineskeleton}

\begin{figure}[h!]
\includegraphics[width=1 in]{dynkina1a1.jpg}
\caption{The skeleton $\mathcal{D}_*(A_1^{(1)})$}
\label{affineskeleton}
\end{figure}
thus by Corollary ~\ref{realroots}, 
$$\Phi(A_1^{(1)})=W\alpha_1\amalg W\alpha_2.$$  

The disjoint $W$--orbits are demonstrated in $\mathcal{P}(A_1^{(1)})$ (Figure~\ref{affineorbit}).  Here we have chosen to label each root $\beta$ with $(\rho_1(\beta),\rho_2(\beta))$:

\begin{figure}[h!]
\includegraphics[width=4 in]{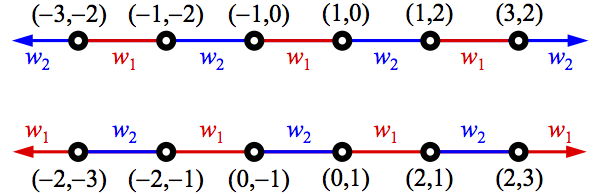}
\caption{$W$--orbit of real roots of $A_1^{(1)}$}
\label{affineorbit}
\end{figure}

Consider the hyperbolic example \[A_2^*=\left(\begin{array}{cc} 2 & -3\\ -3 & 2\end{array}\right).\]  The graph $\mathcal{P}(A_2^*)$ is given in Figure~\ref{hyporbit}.

\begin{figure}[h!]
\includegraphics[width=4 in]{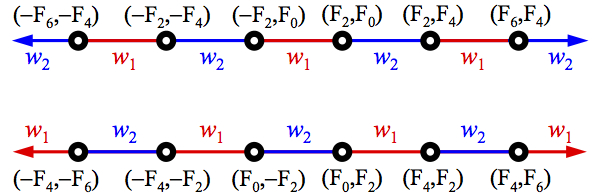}
\caption{$W$--orbit of real roots of $\mathcal{H}(3)$}
\label{hyporbit}
\end{figure}

where $F_n$ denotes the $n$--th Fibonacci number (\cite{F}, \cite{KaM}, \cite{ACP}).

The final two examples correspond to rank 3 root systems.  Consider $H_2^{(3)}$ in ~\cite{CCCMNNP},~\cite{Li}, given by \[H_2^{(3)}=\left(\begin{array}{ccc} 2 & -2 & -1 \\ -2 & 2 & -1 \\ -1 & -1 & 2\end{array}\right)\] with Dynkin diagram as in Figure~\ref{hyp}. 

Its Dynkin diagram (Figure~\ref{hyp}) contains a double edge and double arrow which, when removed, form a connected skeleton (Figure~\ref{conn}).
\begin{figure}[h!]
\includegraphics[width=1 in]{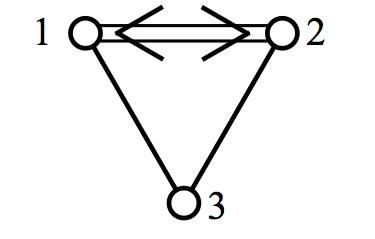}
\caption{Dynkin diagram of type $H_{2}^{(3)}$}
\label{hyp}
\end{figure}

\begin{figure}[h!]
\includegraphics[width=1 in]{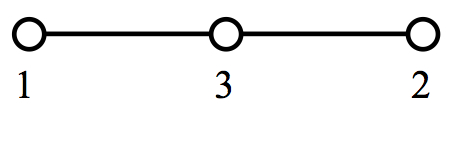}
\caption{The connected skeleton $\mathcal{D}_*(H_{2}^{(3)})$}
\label{conn}
\end{figure}

Thus by Corollary ~\ref{realroots} there is only one $W$--orbit, so we may expect that $\mathcal{P}(H_2^{(3)})$ will be a connected graph.  Indeed this is the case, as shown in Figure ~\ref{fig3} .  As we will prove in Section ~\ref{fixed}, this is one of only two rare examples of a hyperbolic root system on which the Weyl group acts both transitively and simply transitively.

\begin{figure}[h!]
\includegraphics[width=6 in]{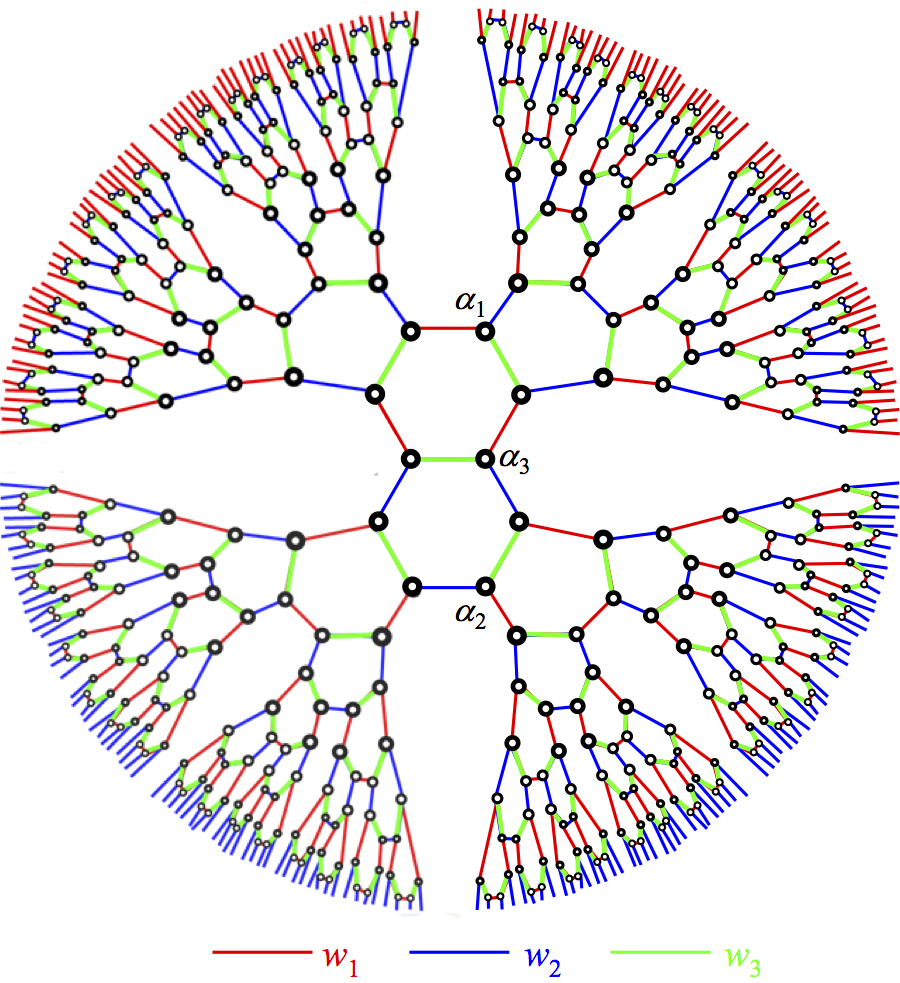}
\caption{The connected $\mathcal{P}-$ graph of hyperbolic root system $H_2^{(3)}$.}
\label{fig3}
\end{figure}

\begin{figure}[t]
\includegraphics[width=6 in]{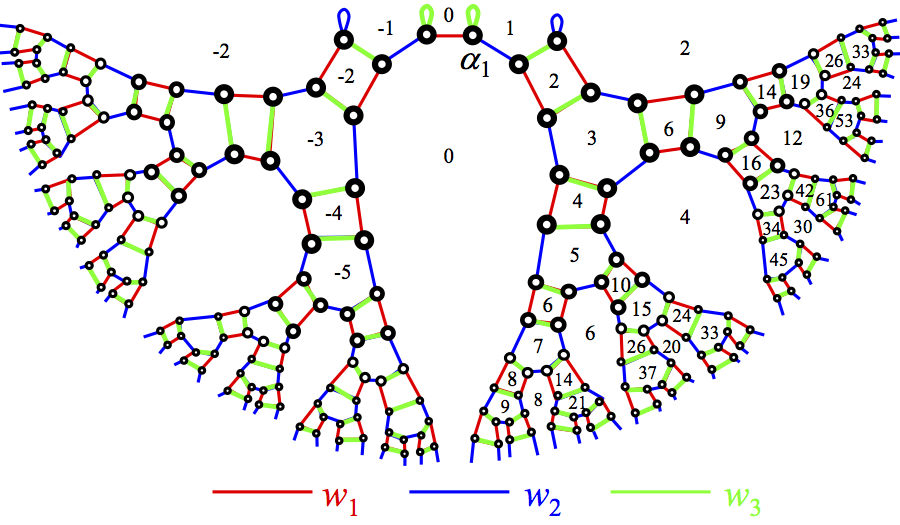}
\caption{Connected component of ${P}(\widehat{A}_1^{(1)})$ corresponding to $W\alpha_1$.  To decode the vertex labelling of the roots, see Remark ~\ref{vertex}.}
\label{orbit1}
\end{figure}

Now consider the Feingold-Frenkel rank 3 hyperbolic root system, 
\[\widehat{A}_1^{(1)}=\left(\begin{array}{ccc} ~2 & -2 & ~0 \\ -2 & ~2 & -1 \\ ~0 & -1 & ~2\end{array}\right).\]  

The Dynkin diagram for $\widehat{A}_1^{(1)}$ is given in Figure~\ref{FF}. The skeleton $\mathcal{D}_*(\widehat{A}_1^{(1)})$ is given in Figure~\ref{FFskeleton}.

\begin{figure}[h!]
\includegraphics[width=1 in]{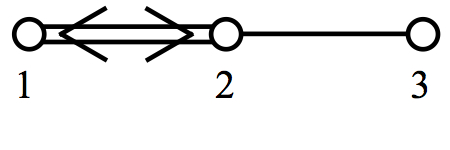}
\caption{Dynkin diagram of type $\widehat{A}_1^{(1)}$}
\label{FF}
\end{figure}

\begin{figure}[h!]
\includegraphics[width=1 in]{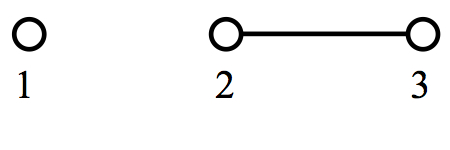}
\caption{The skeleton $\mathcal{D}_*(\widehat{A}_1^{(1)})$}
\label{FFskeleton}
\end{figure}

which implies via Corollary ~\ref{realroots} that the root system $\Phi(\widehat{A}_1^{(1)})$ is the disjoint union, 
$$\Phi=W\alpha_1\amalg W\{\alpha_2,\alpha_3\}.$$   

\begin{figure}[h!]
\includegraphics[width=6 in]{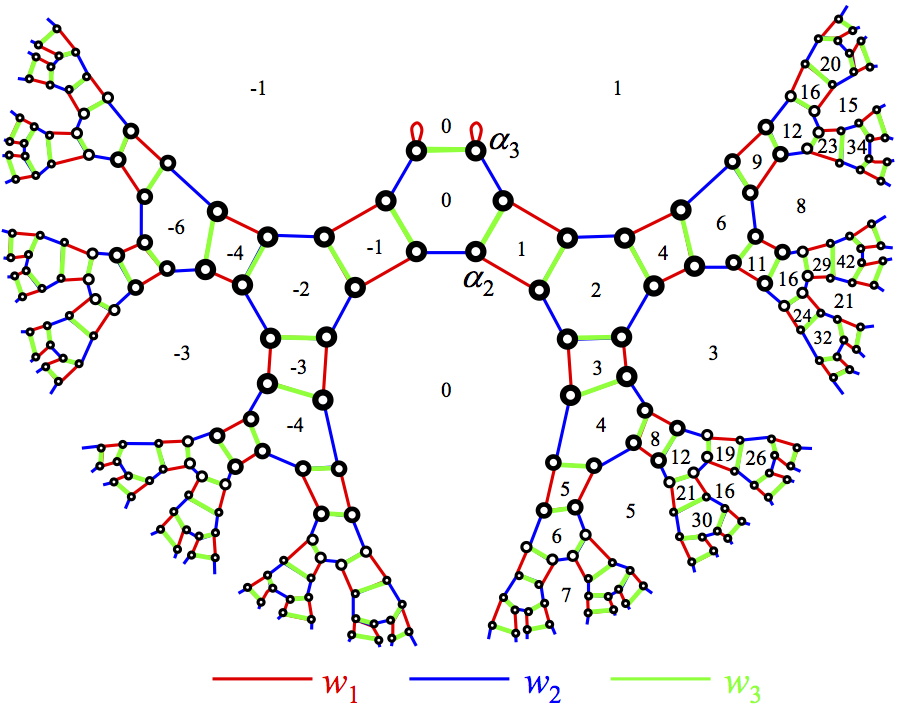}
\caption{Connected component of ${P}(\widehat{A}_1^{(1)})$ corresponding to $W\{\alpha_2,\alpha_3\}$. To decode the vertex labelling of the roots, see Remark ~\ref{vertex}.}
\label{orbit2}
\end{figure}

\begin{rem}\label{vertex}
The vertex labelling in Figures ~\ref{orbit1} and ~\ref{orbit2} is encoded by the labelling of the $4$--, $6$--, and $\infty$--gons of the graph. These polygons correspond to the defining relations of $W$. For instance, the $6$--gons correspond to the relation $(w_2w_3)^3=1$. Given a root $\gamma$  the vertex labeling is given by $(\rho_1(\gamma),\rho_2(\gamma),\rho_3(\gamma))$. \end{rem}

Though the orbit graphs $\mathcal{P}$ for $\widehat{A}_1^{(1)}$ are drawn as if they appear in the Poincar\'e disk, there is no natural embedding of them in the disk. Since they are orbit graphs of real roots, they naturally live on a hyperboloid of 1 sheet which supports the real roots. 

It is an interesting question to compare the combinatorial structure of the $\mathcal{P}$-graph with the combinatorial structure of the action of the Weyl group $W$ on the root lattice. Let $w_i$ denote a set of generators for $W$. Then two real roots $\beta_1$ and $\beta_2$ are connected by a vertex  in $\mathcal{P}$ if one is transformed into the other by a generator $w_i$ of $W$. However, the distance between $\beta_1$ and $\beta_2$ on the root lattice may be arbitrarily large.

Consider for example $A=A_1^{(1)}$ which is a rank 2 subsystem of $\widehat{A}_1^{(1)}$. Then $W=W(A)$ is the infinite dihedral group $W=\mathbb{Z}/2\mathbb{Z}\ast\mathbb{Z}/2\mathbb{Z}$. Let $\beta_0$ be a real root and let $w_n=(w_1w_2)^n\in W$ be some Weyl group element. Then 
$w_n (\beta_0)$ and $(w_2w_n)(\beta_0)$ are adjacent in $\mathcal{P}$. On the other hand, in the root lattice, the Weyl distance $d(w_n \beta_0, (w_2w_n)\beta_0)$ is proportional to $n$.

A similar phenomenon occurs for any hyperbolic Weyl group such that two of its generators $w_i$ and $w_j$ generate a sub--Weyl group $W_{ij}=\mathbb{Z}/2\mathbb{Z}*\mathbb{Z}/2\mathbb{Z}$.

Note that $\mathcal{P}(\widehat{A}_1^{(1)})$ shows the 6 roots in this root system which are fixed by simple reflections:

\begin{enumerate}
\item $w_1$ fixes $\pm \alpha_3$,
\item $w_2$ fixes $\pm (\alpha_1+2\alpha_2+2\alpha_3)$,
\item $w_3$ fixes $\pm \alpha_1$.

\end{enumerate}

\section{Same root lengths in distinct orbits}

For a finite root system associated to a simple Lie algebra, all roots of the same length lie in the same $W$--orbit.  However, in  a Kac--Moody  root system, real roots of the same length may lie in different $W-$orbits.

{\bf Example.} The generalized Cartan matrix of the  Dynkin diagram of rank 4 noncompact hyperbolic type (Figure~\ref{rk4}) is symmetrizable but not symmetric. There are 3 distinct lengths of real roots, but 4 $W$--orbits on real roots. In particular, $\alpha_1$ and $\alpha_3$ have the same root length but lie in different $W$--orbits.

\begin{figure}[h!]
\includegraphics[width=1.85 in]{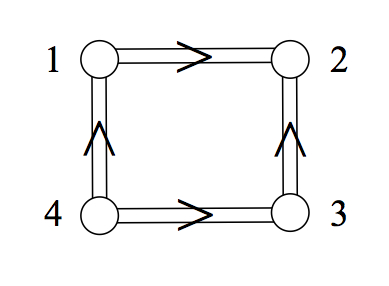}
\caption{Dykin diagram of type $H_{23}^{(4)}$}
\label{rk4}
\end{figure}

Given a symmetric generalized Cartan matrix $A$ of  noncompact hyperbolic type such that real roots of the same length lie in distinct orbits, we show that there is a nonsymmetric generalized Cartan matrix $A'$ with the same Weyl group as $A$ and the same Coxeter matrix as $A$, such that the real roots of different  lengths lie in distinct $W$--orbits. 

In other words, for any Kac--Moody  Weyl group $W$, we show that  there is a symmetrizable Kac--Moody  algebra such that the  lengths of real roots in different orbits are different.

Consider the following symmetric generalized Cartan matrix of noncompact hyperbolic type:
$$A=\left(\begin{array}{ccc} ~2 & -2 & ~0 \\ -2 & ~2 & -1 \\ ~0 & -1 & ~2\end{array}\right)$$
and let 
$$A'=\left(\begin{array}{ccc} ~2 & -1 & ~0 \\ -4 & ~2 & -1 \\ ~0 & -1 & ~2\end{array}\right).$$
Then $A'$ is non symmetric. Also $A$ and $A'$ have different Dynkin diagrams but the same Weyl group $W\cong PGL_2(\mathbb{Z})$ and the same Coxeter matrix
$$\left(\begin{array}{ccc} 1 & \infty & 2  \\ \infty & 1 & 3 \\  2 & 3  & 1 \end{array}\right).$$
Let $\Phi$ be the real roots of $A$ and $\Phi'$ the real roots of $A'$. The 2 different root lengths in $A'$ justify the 2 distinct $W$--orbits of real roots in $W\Phi'$. Since $A$ and $A'$ have the same Coxeter matrix, there are also  2 distinct $W$--orbits of real roots in  $W\Phi$.  However, since $A$ is symmetric, all real roots of $A$ have the same root length.

We will prove that such a phenomenon can only occur if $A$ has rank 2 affine type, or $A$ has rank 3 noncompact hyperbolic type. We will make use of the following result from ([CCCMNNP]).

\begin{proposition}\label{rk3}([CCCMNNP]) A symmetrizable hyperbolic generalized Cartan matrix contains an $A_1^{(1)}$ or $A_2^{(2)}$ proper indecomposable  submatrix if and only if rank $A=3$ and $A$ has  noncompact type.
\end{proposition}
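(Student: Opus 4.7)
The plan is to handle the two implications separately, in each case invoking the classification of rank-$2$ symmetrizable affine Cartan matrices together with the standard fact (Kac, Ch.~4) that every proper principal submatrix of an indecomposable affine generalized Cartan matrix is of finite type.

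For the implication $(\Leftarrow)$, I would start with $A$ symmetrizable hyperbolic of rank $3$ and noncompact type. By the definition of noncompact hyperbolic, some proper indecomposable submatrix $B$ of $A$ is of affine type. Since $A$ is $3\times 3$, $B$ has size $1$ or $2$; but the only $1\times 1$ GCM is $(2)$, which is $A_1$ and finite, so $B$ must be $2\times 2$. Among $2\times 2$ symmetrizable GCMs the affine ones are exactly those with $a_{12}a_{21}=4$: these are $A_1^{(1)}$ when $a_{12}=a_{21}=-2$, and $A_2^{(2)}$ when $\{|a_{12}|,|a_{21}|\}=\{1,4\}$. This settles the easy direction.

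For the implication $(\Rightarrow)$, assume $A$ is symmetrizable hyperbolic (hence indecomposable) and that $A$ has a proper indecomposable submatrix $B$ on a two-element vertex set $\{i,j\}$, with $B$ of type $A_1^{(1)}$ or $A_2^{(2)}$. Then $\ell\geq 3$, and I would first show $\ell=3$, after which noncompactness is immediate from the presence of the proper affine submatrix $B$. Suppose for contradiction $\ell\geq 4$. Since $A$ is indecomposable, its Dynkin diagram $\mathcal{D}(A)$ is connected, so there must be an edge from $\{i,j\}$ to its complement, giving some $k\in\{1,\dots,\ell\}\setminus\{i,j\}$ adjacent to $i$ or $j$. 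The principal submatrix $A'$ on $\{i,j,k\}$ is then proper (as $\ell\geq 4$) and indecomposable (as its Dynkin subdiagram is connected). By hyperbolicity of $A$, $A'$ is either finite or affine; but $A'$ contains the affine matrix $B$ as a $2\times 2$ principal submatrix, ruling out finite type. Hence $A'$ is affine, and this then contradicts the standard property that every proper principal submatrix of an indecomposable affine GCM is of finite type, since $B$ is a proper affine principal submatrix of $A'$.

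I expect the only mildly delicate point to be the extraction of the vertex $k$: I need $k$ genuinely adjacent to $\{i,j\}$ in $\mathcal{D}(A)$, not just lying in some weaker notion of the same component, so that the $3\times 3$ submatrix on $\{i,j,k\}$ is indecomposable. This is supplied by indecomposability of $A$, which forces at least one edge between any proper nonempty subset of vertices and its complement. Once that is in place, the remainder is merely a matter of invoking the rank-$2$ affine classification and the Kac finite-type property of proper subdiagrams of affine Cartan matrices in the correct order.
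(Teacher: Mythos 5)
Your argument is correct, but it proceeds quite differently from the paper, which offers no proof at all for this proposition: it is quoted from the classification paper [CCCMNNP], and there (as in the related theorem that follows in this paper) the verification ultimately rests on inspecting the tables of hyperbolic Dynkin diagrams. You instead give a short, classification-free structural proof: the backward direction is just the definition of noncompact hyperbolic type plus the observation that the only $2\times 2$ affine generalized Cartan matrices are $A_1^{(1)}$ and $A_2^{(2)}$ (since $a_{ij}a_{ji}=4$ forces the entries $(-2,-2)$ or $\{-1,-4\}$), while the forward direction rules out $\ell\geq 4$ by extracting a vertex $k$ adjacent to the affine pair $\{i,j\}$ (using indecomposability of $A$, which indeed follows from $\det A<0$ together with the hyperbolicity condition, or is simply part of the standard definition), noting that the resulting proper indecomposable $3\times 3$ submatrix must be finite or affine by hyperbolicity, cannot be finite since it contains an affine principal submatrix, and cannot be affine since proper principal subdiagrams of an indecomposable affine matrix are of finite type (Kac, Ch.~4; note that symmetrizability passes to principal submatrices, so that lemma applies). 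The logic is sound and the delicate point you flag — genuine adjacency of $k$ so that the $3\times 3$ submatrix is indecomposable — is handled correctly. What your route buys is independence from the tables and a proof that makes the mechanism transparent (an affine rank-$2$ subdiagram can never sit properly inside a larger finite or affine subdiagram, so hyperbolicity caps the rank at $3$); what the paper's route buys is brevity, since the classification of hyperbolic diagrams is already being used elsewhere in that work.
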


\bigskip
\begin{theorem} Given a symmetric generalized Cartan matrix $A$ of  noncompact hyperbolic type such that real roots of the same length lie in distinct orbits, there is a nonsymmetric generalized Cartan matrix $A'$ with the same Weyl group as $A$ and the same Coxeter matrix as $A$, such that  there is a natural bijective correspondence between the orbit structure $W\Phi'$ of the real roots corresponding to $A'$ and the  orbit structure $W\Phi$ of the real roots corresponding to $A$.  \end{theorem}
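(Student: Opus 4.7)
The plan is to show (1) the hypothesis forces $A$ to be of rank $3$ and to contain an $A_1^{(1)}$ proper subdiagram; (2) $A'$ can be constructed by converting each such $A_1^{(1)}$ into an $A_2^{(2)}$; and (3) the orbit bijection then follows from Corollary~\ref{realroots}.

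\textbf{Step 1.} Since $A$ is symmetric, all simple roots of $A$ have the same length, so the hypothesis together with Corollary~\ref{maincor} imply that $\mathcal{D}_*(A)$ is disconnected. Hence there is a multi-edge in $\mathcal{D}(A)$ whose corresponding rank $2$ symmetric proper submatrix has the form $\begin{pmatrix}2 & -a\\-a & 2\end{pmatrix}$ with $a \ge 2$. Hyperbolicity of $A$ forces this proper indecomposable submatrix to be of finite or affine type, so $a = 2$ and it is of type $A_1^{(1)}$. Proposition~\ref{rk3} then yields that $A$ has rank $3$.

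\textbf{Step 2.} Define $A'$ by replacing each pair $(a_{ij}, a_{ji}) = (-2, -2)$ with $(a'_{ij}, a'_{ji}) = (-1, -4)$ (choosing a consistent orientation) and leaving all other entries of $A$ unchanged. Then $A'$ is a nonsymmetric GCM, and for each modified pair $a'_{ij} a'_{ji} = 4 = a_{ij} a_{ji}$, so the Coxeter parameters $c_{ij}$, the Coxeter matrix, and hence the Weyl group of $A'$ coincide with those of $A$. Each modified $2 \times 2$ submatrix becomes $A_2^{(2)}$ (affine), while all other proper submatrices are untouched, so $A'$ is again noncompact hyperbolic. Since the single-edge entries $(-1,-1)$ are left intact, the skeleton $\mathcal{D}_*(A')$ coincides with $\mathcal{D}_*(A)$ as an abstract graph.

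\textbf{Step 3.} Applying Corollary~\ref{realroots} to both $A$ and $A'$ shows that the $W$--orbits on $\Phi(A)$ and on $\Phi(A')$ are indexed by the connected components of the common skeleton $\mathcal{D}_* = \mathcal{D}_*(A) = \mathcal{D}_*(A')$, yielding the natural bijection $W\{\alpha_{J_s}\} \leftrightarrow W\{\alpha'_{J_s}\}$ as $J_s$ runs over these components.

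The main technical point lies in Step 2: when $A$ contains several $A_1^{(1)}$ subdiagrams simultaneously, the orientations of the modifications must be chosen compatibly so that $A'$ remains symmetrizable (and not merely a valid GCM). Because the symmetric rank $3$ noncompact hyperbolic matrices satisfying the hypothesis form a short list (determined by the positions of the entries equal to $-2$ in the $3 \times 3$ matrix), this reduces to a finite case check, of which the Feingold--Frenkel example $\widehat{A}_1^{(1)}$ treated in Section~\ref{sec4} is the prototype.
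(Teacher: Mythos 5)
Your proposal follows essentially the same route as the paper's proof: the hypothesis forces a multi-edge in $\mathcal{D}(A)$, symmetry and hyperbolicity force the corresponding rank $2$ submatrix to be $A_1^{(1)}$, Proposition~\ref{rk3} forces rank $3$ noncompact hyperbolic type, and then the $A_1^{(1)}$ bond is traded for an $A_2^{(2)}$ bond, with the orbit correspondence read off from the skeleton. The difference is one of explicitness: where you construct $A'$ directly and verify that the Coxeter matrix, Weyl group and skeleton are unchanged, the paper merely cites the tables of \cite{CCCMNNP} for the existence of a companion rank $3$ noncompact hyperbolic diagram with an $A_2^{(2)}$ subdiagram and then asserts the bijective correspondence; your Step 3, via the common skeleton and Corollary~\ref{realroots}, is a useful explicit version of that final assertion. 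One caution about your closing paragraph: symmetrizability of $A'$ is not required by the theorem as stated (only a nonsymmetric generalized Cartan matrix with the same Coxeter matrix and matching orbit structure is demanded, and Corollary~\ref{realroots} is formulated for arbitrary generalized Cartan matrices), so the deferred ``finite case check'' is not needed to prove this statement. If, however, you do insist on $A'$ symmetrizable, as the surrounding discussion about root lengths suggests, then your recipe of replacing \emph{every} $(-2,-2)$ pair cannot always be repaired by orientation choices: for $A=\left(\begin{smallmatrix}2&-2&-2\\-2&2&-2\\-2&-2&2\end{smallmatrix}\right)$, which is symmetric, noncompact hyperbolic and satisfies the hypothesis, the cycle condition $a'_{12}a'_{23}a'_{31}=a'_{21}a'_{32}a'_{13}$ fails for every choice of orientations once all three bonds are converted, since one side has absolute value $4^{k}$ and the other $4^{3-k}$; in that case one must instead convert only two of the three double bonds, with opposite orientations, to obtain a symmetrizable nonsymmetric $A'$ with the same Coxeter matrix and skeleton.
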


\medskip\noindent{\it Proof:} Let $\Phi$ be the real roots of $A$ and $\Phi'$ the real roots of $A'$. If real roots in $\Phi$ of the same root lengths lie in distinct $W$--orbits, then in the skeleton of the Dynkin diagram $\mathcal{D}(A)$, there must be 2 disconnected vertices, say $i$ and $j$. Hence between vertices $i$ and $j$ in the Dynkin diagram, there must have been $\geq 2$ edges. Hence $a_{ij}a_{ji}\geq 4$. Since $A$ has noncompact hyperbolic type, it follows that $A$ contains an $A_1^{(1)}=\left(\begin{array}{cc} ~2 & -2 \\ -2 & ~2  \end{array}\right)$ 
%or $A_2^{(2)}$ 
proper indecomposable  submatrix.
% where$\qquad A_2^{(2)}=\left(\begin{array}{cc} ~2 & -1 \\ -4 & ~2  \end{array}\right).$$
Applying Proposition~\ref{rk3}, we see that $A$ must have rank 3 noncompact hyperbolic type. Next we claim that for every Dynkin diagram of rank 3 noncompact hyperbolic type with an $A_1^{(1)}$ subdiagram, there is another Dynkin diagram of rank 3 noncompact hyperbolic type with an $A_2^{(2)}=\left(\begin{array}{cc} ~2 & -1 \\ -4 & ~2  \end{array}\right)$ subdiagram. This is easily checked in the tables in the classification of hyperbolic Dynkin diagrams in ([CCCMNNP]). $\square$

It follows that there is a natural bijective correspondence between the orbit structure $W\Phi'$ of the real roots corresponding to $A'$ and the  orbit structure $W\Phi$ of the real roots corresponding to $A$. 

From [CCCMNNP],  if $A_0$ is a rank 2 proper submatrix of a symmetrizable hyperbolic generalized Cartan matrix $A$ of rank $\geq 4$ then $A_0$ is of finite type. We have the following corollary which follows easily from the tables in the classification of hyperbolic Dynkin diagrams in ([CCCMNNP]).

\begin{corollary} Let $A$ symmetric hyperbolic generalized Cartan matrix of rank $\geq 4$ or symmetrizable and rank $\geq 7$. Then any 2 real roots of the same length lie in the same $W$--orbit.
\end{corollary}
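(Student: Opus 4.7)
The plan is to reduce the claim to a statement about simple roots and then apply Corollary~\ref{maincor}. Any real root has the form $\beta = w(\alpha_i)$ for some $w \in W$ and $\alpha_i \in \Pi$, and since $W$ preserves the invariant form, two real roots of equal length are $W$-translates of simple roots of equal length, and they lie in the same $W$-orbit if and only if those simple roots do. By Corollary~\ref{maincor} the latter is equivalent to the corresponding vertices being joined by a path of single edges in $\mathcal{D}$. Thus I would reduce the problem to showing that any two simple roots of equal length lie in the same connected component of the skeleton $\mathcal{D}_\ast$.

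For the symmetric rank $\geq 4$ case, taking the symmetrization $D = I$ gives $(\alpha_i \mid \alpha_i) = 2$ for every $i$, so all simple roots have the same length and the reduced statement becomes: $\mathcal{D}_\ast$ is connected. The proposition cited just above the corollary guarantees that every rank 2 proper submatrix of $A$ is of finite type; combined with symmetry of $A$, this excludes the $B_2$, $C_2$, and $G_2$ possibilities, because a symmetric rank 2 generalized Cartan matrix with $a_{ij}a_{ji} \in \{2,3\}$ would require non-integer off-diagonal entries. Hence $a_{ij} = a_{ji} \in \{0,-1\}$ for every pair, so $\mathcal{D}$ has only single edges and $\mathcal{D}_\ast = \mathcal{D}$, which is connected since $A$ is indecomposable.

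For the symmetrizable rank $\geq 7$ case, the same proposition forces every rank 2 submatrix to be of finite type, so between any pair of vertices only the types $A_1 \times A_1$ (no edge), $A_2$ (single edge), $B_2/C_2$, and $G_2$ are possible. The key observation is that multi-edges of type $B_2/C_2$ or $G_2$ join simple roots of distinct lengths (the symmetrization ratios $q_i/q_j$ equal $2$ or $3$ there), so the subdiagram of $\mathcal{D}$ induced on any fixed-length set of vertices consists only of single edges and embeds into $\mathcal{D}_\ast$. It remains to check that this same-length subdiagram is connected in each case. The main obstacle is combinatorial rather than conceptual: hyperbolic Kac--Moody algebras exist only up to rank 10, so the classification in [CCCMNNP] yields a short explicit list of diagrams in rank $\geq 7$ to inspect, and for each one it must be verified directly that vertices of a given length all lie in a common connected component of $\mathcal{D}_\ast$. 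Ranks $4$, $5$, $6$ must be excluded from this step, as the example $H_{23}^{(4)}$ at the start of Section~6 already shows that same-length simple roots can lie in distinct orbits; the rank bound in the hypothesis is exactly what is needed to remove such configurations from the classification tables.
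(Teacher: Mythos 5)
Your proposal is correct and takes essentially the same route as the paper: the paper deduces the corollary from the fact (stated just above it) that every rank 2 proper submatrix of a symmetrizable hyperbolic generalized Cartan matrix of rank $\geq 4$ is of finite type, together with an inspection of the classification tables in [CCCMNNP]. Your reduction via Corollary~\ref{maincor} to connectivity of same-length vertices in the skeleton $\mathcal{D}_{\ast}$, your handling of the symmetric case (only single edges, so $\mathcal{D}_{\ast}=\mathcal{D}$), and your deferral of the symmetrizable rank $\geq 7$ case to the finite table check are just a more explicit version of the paper's terse argument.
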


If $A$ has rank 4, 5 or 6 and is not symmetric, then the corollary is false, as the above example indicates.

%%%%%%%%

\section{Fixed points and orbifold graphs associated to the graph $\mathcal{P}$}\label{fixed}

In this section we give sufficient conditions in terms of the generalized Cartan matrix $A$ (equivalently ${\mathcal D}$) for a simple reflection in $W$ to stabilize a simple root.  We thus obtain necessary conditions for the action of $W$ to be simply transitive, and show that the Weyl group for $E_{10}$ acts transitively but not simply transitively on  real roots. We also obtain sufficient conditions for a simple reflection to stabilize a real root that is not simple. 

We begin by recalling the notion of complete graph and we apply the definition to Dynkin diagrams.

\begin{definition}
A Dynkin diagram $\mathcal{D}$ is called \textbf{complete} if it is a complete graph, that is, if every pair of distinct vertices $\mathcal{D}$ is connected by an edge.
\end{definition}

We now establish sufficient conditions for the existence of fixed roots.

\begin{lemma}\label{simply}
Let $A$ be a Cartan matrix (or generalized Cartan matrix), $\mathcal{D}$ its corresponding Dynkin diagram, $\Phi$ the set of real roots with basis $\Pi$ indexed by $I$, and $W$ the Weyl group.  If $A$ contains zeros (equivalently, if $\mathcal{D}$ is not complete), then there exist simple roots in $\Phi$ that are stabilized by simple reflections in $W$, and thus $W$ does not act simply transitively on  $\Phi$.
\end{lemma}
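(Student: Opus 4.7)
The proof is essentially a direct computation once we unpack the hypotheses. The plan is to translate the assumption ``$A$ contains zeros'' into a statement about off-diagonal entries, then apply the defining formula for simple reflections.

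First, I would observe the equivalence stated in the lemma: by the construction of $\mathcal{D}$ recalled earlier, vertices $i$ and $j$ are connected by at least one edge precisely when $a_{ij} \neq 0$ (which, by the GCM axiom $a_{ij}=0 \iff a_{ji}=0$, is the same as $a_{ji}\neq 0$). Hence $\mathcal{D}$ fails to be complete if and only if some off-diagonal entry $a_{ij}$ with $i\neq j$ vanishes. So I may fix indices $i\neq j$ with $a_{ij} = a_{ji} = 0$.

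Next, applying the definition of the simple reflection $w_i$ to the simple root $\alpha_j$, I get
\[
w_i(\alpha_j) \;=\; \alpha_j - \langle \alpha_j, \alpha_i^\vee\rangle\, \alpha_i \;=\; \alpha_j - a_{ji}\,\alpha_i \;=\; \alpha_j.
\]
Thus $w_i$ is a nontrivial element of $W$ (since simple reflections are nontrivial involutions) that stabilizes the simple root $\alpha_j$. In particular, the stabilizer $W_{\alpha_j}$ is nontrivial, so $W$ does not act simply transitively on the $W$-orbit of $\alpha_j$, and hence not on $\Phi$.

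There is no real obstacle here; the content of the lemma is essentially the observation that the ``orthogonality'' relation $a_{ij}=0$ between two simple roots is precisely what makes the corresponding simple reflection fix the other simple root. The only thing to make explicit is the correspondence between zeros in $A$ and missing edges in $\mathcal{D}$, which follows immediately from how $\mathcal{D}$ was constructed from $A$.
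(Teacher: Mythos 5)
Your proof is correct and follows essentially the same route as the paper's: identify a pair $i\neq j$ with $a_{ij}=a_{ji}=0$, apply the reflection formula $w_i(\alpha_j)=\alpha_j-a_{ji}\alpha_i=\alpha_j$, and conclude that a nontrivial stabilizer rules out simple transitivity. The only cosmetic difference is that the paper also records the symmetric computation $w_j(\alpha_i)=\alpha_i$, which adds nothing essential.
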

\begin{proof}
We first verify the equivalency in the hypothesis, namely, that if $A$ contains a zero then $\mathcal{D}$ is not a complete graph. Assume $a_{ij}=0$.  By the definition of a Cartan matrix, this implies that $a_{ji}=0$.  Thus the submatrix $A_{(i,j)}$ corresponds to the Cartan matrix for $A_1\times A_1$:
\[A_{(i,j)}=\left( \begin{array}{ccc} 2 & 0\\ 0 & 2\end{array}\right),\]
and this implies vertices $i,j$ in $\mathcal{D}$ are not connected by an edge.  Thus $\mathcal{D}$ is not complete. 

We now prove the existence of fixed simple roots by observing the action of simple roots $w_i$ and $w_j$ on simple roots $\alpha_i$ and $\alpha_j$:
$$w_i\alpha_j=\alpha_j-a_{ji}\alpha_i = \alpha_j,$$
and
$$w_j\alpha_i=\alpha_i-a_{ij}\alpha_j = \alpha_i.$$
Thus $w_j$ fixes $\alpha_k$, and $w_k$ fixes $\alpha_j$.  This is illustrated by the $\mathcal{P}-$graph for $A_1\times A_1$ in Figure~\ref{simple}.

\begin{figure}[h!]
\includegraphics[width=1 in]{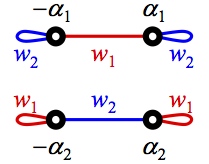}
\caption{Orbit structure of $A_1\times A_1$}
\label{simple}
\end{figure}
The existence of roots fixed by $W$ proves that $W$ does not act simply transitively on  $\Phi$.
\end{proof}

Restating the lemma in its contrapositive form reveals the necessary conditions for a Weyl group to act simply transitively.

\begin{corollary}
If a Weyl group $W$ acts simply transitively on  its corresponding root system $\Phi$, then its associated Dynkin diagram is complete.
\end{corollary}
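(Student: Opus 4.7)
The plan is simply to invoke the contrapositive of the preceding Lemma~\ref{simply}. That lemma established the implication: if the generalized Cartan matrix $A$ contains a zero entry (equivalently, if $\mathcal{D}$ is not complete as a graph), then there exist indices $i\neq j$ with $a_{ij}=a_{ji}=0$, and the direct computation $w_i\alpha_j = \alpha_j - a_{ji}\alpha_i = \alpha_j$ produces a nontrivial stabilizer of the simple root $\alpha_j$ inside $W$, which prevents the action of $W$ on $\Phi$ from being simply transitive.

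To prove the corollary, I would argue by contrapositive. Suppose $\mathcal{D}$ is not complete. Then there exist distinct vertices $i$ and $j$ in $\mathcal{D}$ not joined by any edge, which by the construction of the Dynkin diagram from the generalized Cartan matrix is equivalent to $a_{ij}=0$, and hence by the defining property $a_{ij}=0\iff a_{ji}=0$ of a generalized Cartan matrix, also $a_{ji}=0$. Lemma~\ref{simply} then produces a simple root fixed by a simple reflection, so $W$ cannot act simply transitively on $\Phi$. This contradicts the assumption of the corollary, so $\mathcal{D}$ must be complete.

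There is essentially no obstacle to this proof: the corollary is a purely logical restatement of Lemma~\ref{simply}, and no new computation or geometric input is required beyond the equivalence ``$A$ contains a zero $\iff$ $\mathcal{D}$ is not complete,'' which was already verified inside the proof of Lemma~\ref{simply}. The only care needed is to state the direction of implication correctly and to remember that the hypothesis of simple transitivity forbids any stabilizer of any root, not merely of simple roots.
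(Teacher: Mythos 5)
Your proposal is correct and is exactly what the paper does: the corollary is stated as the contrapositive of Lemma~\ref{simply}, with the equivalence between a zero entry of $A$ and incompleteness of $\mathcal{D}$ already handled inside that lemma. No further comment is needed.
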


Since the Dynkin diagrams for $E_{10}$ and $\widehat{A}_1^{(1)}$ are not complete, we have the following immediate consequence of Corollary ~\ref{realroots} and Lemma ~\ref{simply}.

\begin{corollary}\label{noE10}
The Weyl group for $E_{10}$ acts transitively on  its real roots, but not simply transitively. The Weyl group of $\widehat{A}_1^{(1)}$ is neither transitive nor simply transitive on its real roots.
\end{corollary}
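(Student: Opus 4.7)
The plan is to verify in each case the structural conditions on the Dynkin diagram needed to invoke Corollary \ref{realroots} (to count $W$-orbits on real roots) and Lemma \ref{simply} (to rule out simple transitivity), and then to read off both assertions directly. No new ideas are required; the work reduces to identifying single edges, multi-edges, and non-adjacent vertex pairs in each diagram.

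For $E_{10}$, I would use that its Dynkin diagram is the standard $T$-shape on $10$ nodes with every edge single and unlabeled. Therefore the skeleton $\mathcal{D}_{\ast}(E_{10})$ coincides with $\mathcal{D}(E_{10})$ and is connected, so by Corollary \ref{realroots} there is a single $W$-orbit on the real roots $\Phi(E_{10})$, establishing transitivity. On the other hand, the diagram is very far from complete: most pairs of vertices are non-adjacent, so the generalized Cartan matrix has many off-diagonal zero entries $a_{ij}=a_{ji}=0$. Each such zero falls under the hypothesis of Lemma \ref{simply}, producing simple roots fixed by simple reflections (for instance $w_i\alpha_j=\alpha_j$), which rules out simple transitivity.

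For $\widehat{A}_{1}^{(1)}$, I would read off the generalized Cartan matrix displayed in the paper just before this corollary and consult Figure \ref{FFskeleton}: the skeleton $\mathcal{D}_{\ast}(\widehat{A}_{1}^{(1)})$ is obtained by deleting the double edge between vertices $1$ and $2$, leaving vertex $1$ isolated and a single edge between vertices $2$ and $3$. So the skeleton has the two connected components indexed by $J_{1}=\{1\}$ and $J_{2}=\{2,3\}$, and Corollary \ref{realroots} yields the decomposition $\Phi=W\alpha_{1}\amalg W\{\alpha_{2},\alpha_{3}\}$, so the action of $W$ is not transitive. Moreover $a_{13}=a_{31}=0$, so the Dynkin diagram is not complete, and Lemma \ref{simply} again furnishes fixed simple roots (explicitly $w_{1}\alpha_{3}=\alpha_{3}$ and $w_{3}\alpha_{1}=\alpha_{1}$), so the action is not simply transitive either.

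The main (and essentially only) obstacle is the diagram-reading step: confirming that $E_{10}$ is a connected diagram whose only edges are single and that $\widehat{A}_{1}^{(1)}$ has precisely the adjacency pattern of one double edge, one single edge, and one non-adjacent pair. Everything else is a mechanical consequence of Corollary \ref{realroots} and Lemma \ref{simply}, which justifies the author's description of this corollary as an immediate consequence of those two results.
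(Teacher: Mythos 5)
Your proposal is correct and follows exactly the paper's intended route: the paper deduces the corollary immediately from Corollary~\ref{realroots} (connected skeleton for $E_{10}$, two-component skeleton for $\widehat{A}_1^{(1)}$) together with Lemma~\ref{simply} (both diagrams are incomplete, so simple reflections fix simple roots). Your diagram-reading details, including the explicit fixed roots $w_1\alpha_3=\alpha_3$ and $w_3\alpha_1=\alpha_1$, are consistent with the paper's discussion.
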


These results raise the question, how many hyperbolic root systems have Weyl groups that act both transitively and simply transitively?  In the following corollary, we prove that there are only 2.

\begin{corollary}
Let $\mathcal{D}$ be a Dynkin diagram of hyperbolic type, $A$ its corresponding generalized Cartan matrix, $\Phi$ its corresponding root system and $W$ its Weyl group. Assume that $W$ acts both transitively and simply transitively on  $\Phi$.  Then $\mathcal{D}$ is either $H_2^{(3)}$ or $H_1^{(4)}$ in the classification (\cite{CCCMNNP}, \cite{Li}, \cite{KM}, \cite{Sa}). 
\end{corollary}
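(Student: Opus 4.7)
The plan is to extract two purely combinatorial constraints on $\mathcal{D}$ from the two hypotheses, and then match them against the classification of hyperbolic Dynkin diagrams in \cite{CCCMNNP}. First, since $W$ acts simply transitively on $\Phi$, the contrapositive of Lemma \ref{simply} forces $\mathcal{D}$ to be a complete graph: every pair of distinct vertices is joined by at least one edge, equivalently $a_{ij}\neq 0$ for all $i\neq j$. Second, since $W$ acts transitively on $\Phi$, Corollary \ref{realroots} forces the skeleton $\mathcal{D}_*$ to be connected on all $\ell$ vertices, i.e., the subgraph of $\mathcal{D}$ consisting of its single (unlabelled, unarrowed) edges must already connect every pair of vertices by a path.

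I next carry out a rank-by-rank analysis. In rank $2$, the hyperbolic condition demands $a_{12}a_{21}\geq 5$, so the unique edge of $\mathcal{D}$ is never a single edge and $\mathcal{D}_*$ is disconnected; rank $2$ is therefore impossible. In rank $3$, completeness makes $\mathcal{D}$ a triangle, and connectedness of $\mathcal{D}_*$ requires at least two of its three edges to be single edges; scanning the rank-$3$ hyperbolic triangle diagrams in \cite{CCCMNNP}, the unique such diagram is $H_2^{(3)}$. In rank $4$, completeness forces $\mathcal{D}$ to be the underlying graph $K_4$, with the single edges forming a spanning connected subgraph; inspecting the rank-$4$ hyperbolic tables in \cite{CCCMNNP} shows that the unique diagram meeting both conditions is $H_1^{(4)}$.

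It remains to eliminate rank $\ell \geq 5$. Here completeness alone is the obstruction: in the classification \cite{CCCMNNP} the underlying graphs of hyperbolic Dynkin diagrams of rank $\geq 5$ are sparse (they are tree-like, with a handful of cycles), and none of them is the complete graph $K_\ell$. Hence no rank $\geq 5$ example survives, and the two candidates $H_2^{(3)}$ and $H_1^{(4)}$ exhaust the list; a final sanity check that each actually satisfies the hypotheses finishes the proof (for $H_2^{(3)}$ this has already been remarked in Section \ref{sec4} via Figure \ref{fig3}; for $H_1^{(4)}$ one argues identically).

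The main obstacle is not conceptual but rather combinatorial bookkeeping: once the two structural constraints ``$\mathcal{D}$ complete'' and ``$\mathcal{D}_*$ connected'' are extracted, the result reduces to a finite case-check against the tables of \cite{CCCMNNP}. The only subtlety is being careful that in rank $3$ one must not merely find triangle diagrams with two single edges, but also exclude any that are of finite or affine type, and similarly for $K_4$ in rank $4$.
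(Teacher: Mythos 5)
Your overall strategy is the same as the paper's: extract the two necessary conditions (simple transitivity $\Rightarrow$ $\mathcal{D}$ complete via Lemma~\ref{simply}; transitivity $\Rightarrow$ $\mathcal{D}_*$ connected via Corollary~\ref{realroots}), dispose of rank $2$, and finish by a finite check against the classification in \cite{CCCMNNP}. But there is a genuine gap: you skip the paper's intermediate deduction that transitivity forces all real roots (in particular all simple roots) to have one length, hence the generalized Cartan matrix $A$ must be \emph{symmetric}, so that only the symmetric/symmetrizable part of the classification needs to be scanned. Without that step, your rank-$3$ claim that ``the unique hyperbolic triangle with at least two single edges is $H_2^{(3)}$'' is false as a statement about the full tables of \cite{CCCMNNP}, which also contain non-symmetrizable diagrams. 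For example,
\[
\left(\begin{array}{ccc} 2 & -2 & -1\\ -1 & 2 & -1\\ -1 & -1 & 2\end{array}\right)
\]
is of hyperbolic type (its proper rank-$2$ subdiagrams are $B_2$, $A_2$, $A_2$ and its determinant is $-3$), its Dynkin diagram is a complete triangle, and its skeleton is connected via the two single edges --- yet it is not $H_2^{(3)}$ (and it is not even symmetrizable). So your two screening conditions alone do not isolate $H_2^{(3)}$ and $H_1^{(4)}$; you must either add the one-length/symmetry argument as the paper does, or supply a separate argument ruling out such non-symmetrizable candidates.

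Two smaller points: your rank-$2$ exclusion and your rank $\geq 5$ exclusion are fine (for the latter, the cleanest justification is that a complete rank-$4$ subdiagram of a rank $\geq 5$ hyperbolic diagram would have to be of finite or affine type, which is impossible since finite and affine diagrams of rank $4$ are never complete graphs; ``the tables are sparse'' is too loose to stand on its own). Also, the paper's own rank-$2$ case uses $a\geq 3$ after the symmetry reduction, whereas you use $a_{12}a_{21}\geq 5$; both are correct.
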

\begin{proof}
Since $W$ acts simply transitively on  $\Phi$, by Lemma ~\ref{simply} $\mathcal{D}$ must be complete.  Since $W$ also acts transitively on  $\Phi$, $\mathcal{D}_*$ is connected by Corollary ~\ref{realroots}.  Thus $\Phi$ contains roots of only 1 length, and $A$ is symmetric.   

\textit{Case 1}: $\mathcal{D}$ is comprised of 2 vertices.  Thus,
\[A=\left( \begin{array}{ccc} 2 & -a\\ -a & 2\end{array}\right),\]
where $a\ge3$.  But then $\mathcal{D}$ contains a multiple (bold) edge, which is removed when forming $\mathcal{D}_*$, which is thus disconnected. Hence $W$ does not act transitively on  $\Phi$ by Corollary ~\ref{realroots}, which is a contradiction.

\textit{Case 2}:  $\mathcal{D}$ is comprised of $\ge 3$ vertices. The classification of hyperbolic dynkin diagrams of ranks $3-10$ (maximal) contains 142 symmetrizable diagrams, of which only $17$ diagrams are complete.  Of these diagrams, only $H_2^{(3)}$ and $H_1^{(4)}$ have connected skeletons $\mathcal{D}_*$. 
\end{proof}

Thus Figure ~\ref{fig3} shows the rare case of a hyperbolic root system that consists of only 1 orbit and has no fixed roots.

Note that Lemma ~\ref{simply} applies to both finite and infinite root systems. We now reveal a consequence of the lemma for the finite dimensional theory.

\begin{corollary}
Let $\mathfrak{g}$ be a finite dimensional semisimple Lie algebra of rank $\ge 3$. Then its Weyl group $W$ does not act simply transitively on  its corresponding root system.
\end{corollary}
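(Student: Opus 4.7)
The plan is to apply Lemma~\ref{simply} directly: it suffices to exhibit at least one zero entry in the generalized Cartan matrix of $\mathfrak{g}$, or equivalently, a pair of non-adjacent vertices in its Dynkin diagram $\mathcal{D}$. Once this is done, Lemma~\ref{simply} immediately produces simple roots stabilized by simple reflections, so $W$ cannot act simply transitively.

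The key input is the Cartan--Killing classification: the Dynkin diagram of every finite dimensional simple Lie algebra is one of $A_n, B_n, C_n, D_n, E_6, E_7, E_8, F_4, G_2$, and in each case $\mathcal{D}$ is a tree (connected and acyclic). Since $\mathfrak{g}$ is semisimple of rank $\ell\geq 3$, its diagram $\mathcal{D}$ is a disjoint union of such trees whose total number of vertices is $\ell$.

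I would then split into two cases. If $\mathfrak{g}$ is not simple, then $\mathcal{D}$ is disconnected, so any two vertices lying in different components are non-adjacent, which forces a zero entry in the Cartan matrix. If $\mathfrak{g}$ is simple, then $\mathcal{D}$ is a single tree on $\ell \geq 3$ vertices and hence carries only $\ell-1$ edges; since $\binom{\ell}{2} > \ell-1$ for $\ell\geq 3$, at least one pair of vertices must fail to be adjacent, again yielding a zero entry in $A$. In either case $\mathcal{D}$ is not complete, and Lemma~\ref{simply} completes the proof.

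No step in this plan presents a real obstacle; the argument is essentially a one-line consequence of Lemma~\ref{simply} together with the structural fact that finite-type Dynkin diagrams are trees. The only point to be careful about is handling the semisimple (not necessarily simple) case separately, since the classification as stated concerns simple Lie algebras; but in that case non-completeness of $\mathcal{D}$ is even more immediate.
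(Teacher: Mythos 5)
Your proposal is correct and follows essentially the same route as the paper: invoke Lemma~\ref{simply} after observing, via the Cartan--Killing classification, that the Dynkin diagram of a rank $\geq 3$ finite-type algebra is not complete. Your version is slightly more explicit than the paper's (which just says ``searching through the classification''), since you pin down the reason --- finite-type diagrams are trees, and a tree on $\ell \geq 3$ vertices has only $\ell-1$ adjacent pairs, fewer than the $\binom{\ell}{2}$ required for completeness --- and you also treat the semisimple-but-not-simple case separately, which the paper glosses over.
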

\begin{proof}
The corresponding Dynkin diagram contains 3 or more vertices. Searching through the classification of finite dimensional simple Lie algebras reveals that $\mathcal{D}$ is not complete, and the result follows.
\end{proof}

Lemma ~\ref{simply} showed sufficient conditions for a simple reflection in $W$ to stabilize a simple root.  The following result provides a criterion for a simple root to stabilize any real root.

\begin{prop} \label{fixedthm} Let $A$ be a generalized Cartan matrix, and let $W(A)$ and $\Phi(A)$ be its associated Weyl group and set of real roots.  Let $A_{(i)}$ denote the $i-$th column vector of $A$,
$$A_{(i)}=\left[\begin{array}{ccc} A_{1i} \\ A_{2i}\\ \vdots \\ A_{li}\end{array}\right].$$
Let $\alpha\in\Phi$.  If $\alpha\cdot A_{(i)}=0$, then $\alpha$ is fixed by $w_i$.

\end{prop}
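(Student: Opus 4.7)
The proposition is a direct one-step calculation, so the plan is simply to unpack the formula for $w_i$ in coordinates. First, since $\alpha\in\Phi$ lies in the root lattice, I expand it in the simple root basis as $\alpha=\sum_{j=1}^{\ell} b_j\alpha_j$, where $b_j=\rho_j(\alpha)\in\Z$ are the coordinates with respect to $\Pi$ (using the dual basis $\Pi^\vee$ introduced earlier).

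Next, I apply the defining formula for the simple reflection, $w_i(\alpha)=\alpha-\langle\alpha,\alpha_i^\vee\rangle\alpha_i$. Linearity of the pairing $\langle\cdot,\alpha_i^\vee\rangle$ together with the identification $\langle\alpha_j,\alpha_i^\vee\rangle=a_{ij}$ recorded in Section~2 gives
$$\langle\alpha,\alpha_i^\vee\rangle\;=\;\sum_{j=1}^{\ell} b_j\,\langle\alpha_j,\alpha_i^\vee\rangle\;=\;\sum_{j=1}^{\ell} b_j\, a_{ij},$$
which is precisely the dot product of the coordinate vector $(b_1,\dots,b_\ell)$ of $\alpha$ against the vector of entries of $A$ in the index $i$, i.e., $\alpha\cdot A_{(i)}$ in the notation of the proposition. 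Substituting back,
$$w_i(\alpha)\;=\;\alpha-(\alpha\cdot A_{(i)})\,\alpha_i.$$

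Finally, the hypothesis $\alpha\cdot A_{(i)}=0$ makes the scalar multiplier of $\alpha_i$ vanish, so $w_i(\alpha)=\alpha$, as claimed. There is no substantial obstacle here: the entire argument is a one-line linearity computation inside the reflection formula, and does not even require that $\alpha$ be a real root --- the same calculation goes through for any element of the root lattice. The only bookkeeping to watch is matching the index convention $\langle\alpha_j,\alpha_i^\vee\rangle=a_{ij}$ with the vector $A_{(i)}$ formed from $A$; once that is pinned down, the proof is immediate.
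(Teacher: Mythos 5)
Your proof is correct and is essentially the paper's own argument: the paper's proof is exactly the one-line computation $w_i\alpha=\alpha-(\alpha\cdot A_{(i)})\alpha_i=\alpha$, which you merely spell out by expanding $\alpha$ in the simple roots and using linearity of the pairing. The only caveat is the row-versus-column bookkeeping you yourself flag: with the stated convention $\langle\alpha_j,\alpha_i^\vee\rangle=a_{ij}$ the multiplier $\sum_j b_j a_{ij}$ is the pairing of $\alpha$'s coordinates with the $i$-th \emph{row} of $A$ rather than the column $A_{(i)}$ (the two agree for symmetric $A$, and the paper itself silently uses the transposed convention, e.g.\ in the proof of Lemma~\ref{lem3}), so the statement should be read with that convention pinned down.
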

\begin{proof} 
$$w_i\alpha=\alpha-(\alpha\cdot A_{(i)})\alpha_i=\alpha.$$
\end{proof}

Recall from Section ~\ref{sec4} that the hyperbolic root system of $\widehat{A}_1^{(1)}$ contains 6 fixed roots.  Each of these roots is fixed by a simple root reflection. That is,  $w_1\in\operatorname{Stab}_W(\alpha_3)$ and $w_3\in\operatorname{Stab}_W(\alpha_1)$.  By a direct application of [Ka, Proposition 3.12a], we obtain the following.
\begin{lemma}\label{stab} (From [Ka, Proposition 3.12a]) 
Consider the fixed points $\pm \alpha_1$, $\pm \alpha_3$, and $\pm (\alpha_1+2\alpha_2+2\alpha_3)$ in the hyperbolic root system $\widehat{A}_1^{(1)}$.  We have
\begin{enumerate}
\item $\operatorname{Stab}_W(\alpha_1)=\langle w_3 \rangle$,
\item $\operatorname{Stab}_W(\alpha_3)=\langle w_1 \rangle$,
\item $\operatorname{Stab}_W(\alpha_1+2\alpha_2+2\alpha_3)=\langle w_2 \rangle$.
\end{enumerate}
\end{lemma}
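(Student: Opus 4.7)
The plan is to separate the three statements into two types. Statements (1) and (2) follow directly from Kac's Proposition 3.12a combined with the structure of the generalized Cartan matrix of $\widehat{A}_1^{(1)}$, while (3) is obtained from (1) via conjugation and a braid relation.

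First, the forward inclusions $\langle w_i \rangle \subseteq \operatorname{Stab}_W(\cdot)$ in each of (1)--(3) are established by direct calculation, guaranteed by Proposition~\ref{fixedthm}: since $a_{13} = a_{31} = 0$ in the generalized Cartan matrix of $\widehat{A}_1^{(1)}$, we immediately have $w_3 \alpha_1 = \alpha_1$ and $w_1 \alpha_3 = \alpha_3$, and a short computation verifies $w_2(\alpha_1 + 2\alpha_2 + 2\alpha_3) = \alpha_1 + 2\alpha_2 + 2\alpha_3$.

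For the reverse inclusions in (1) and (2), I would invoke [Ka, Proposition 3.12a] in the form: for a simple root $\alpha_i$, the stabilizer $\operatorname{Stab}_W(\alpha_i)$ is the parabolic subgroup $\langle w_j : j \neq i,\ a_{ij} = 0 \rangle$. Inspection of the entries of $A$ reveals that the only index $j \neq 1$ with $a_{1j} = 0$ is $j = 3$, yielding $\operatorname{Stab}_W(\alpha_1) = \langle w_3 \rangle$. Symmetrically, the only index $j \neq 3$ with $a_{3j} = 0$ is $j = 1$, giving $\operatorname{Stab}_W(\alpha_3) = \langle w_1 \rangle$.

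For (3), I would reduce to (1) by conjugation. A direct computation using $a_{21} = -2$ and $a_{32} = -1$ yields $w_3 w_2 \alpha_1 = \alpha_1 + 2\alpha_2 + 2\alpha_3$. Since stabilizers transform by conjugation under the $W$-action,
$$\operatorname{Stab}_W(\alpha_1 + 2\alpha_2 + 2\alpha_3) = (w_3 w_2)\, \operatorname{Stab}_W(\alpha_1)\, (w_3 w_2)^{-1} = \langle w_3 w_2 w_3 w_2 w_3 \rangle.$$
Because $a_{23} a_{32} = 1$ places us in the case $c_{23} = 3$, the braid relation $(w_2 w_3)^3 = 1$ holds, forcing $w_3 w_2 w_3 w_2 w_3 = w_2$. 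Hence $\operatorname{Stab}_W(\alpha_1 + 2\alpha_2 + 2\alpha_3) = \langle w_2 \rangle$.

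The main obstacle, strictly speaking, is the non-trivial direction of the imported statement, namely $\operatorname{Stab}_W(\alpha_i) \subseteq \langle w_j : a_{ij} = 0 \rangle$. Proving this from scratch would require ruling out that any reduced word involving $w_i$ itself, or any $w_k$ with $a_{ik} \neq 0$, could fix $\alpha_i$; the standard Coxeter-theoretic proof uses Lemma~\ref{posroot} to track sign changes of positive roots under sub-expressions of such a hypothetical word. All other steps amount to routine calculations in the rank-three root system.
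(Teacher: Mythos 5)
Your overall route coincides with the paper's: the paper's entire ``proof'' of this lemma is the single remark that it follows by a direct application of [Ka, Proposition 3.12a], and you likewise delegate the hard (reverse) inclusions in (1)--(2) to that proposition. The material you add is correct and goes beyond what the paper writes out: the forward inclusions via Proposition~\ref{fixedthm}, the computation $w_3w_2\alpha_1=\alpha_1+2\alpha_2+2\alpha_3$, and the derivation of (3) from (1) by conjugation of stabilizers together with the braid identity $w_3w_2w_3w_2w_3=w_2$ coming from $(w_2w_3)^3=1$; all of these check out against the Cartan matrix of $\widehat{A}_1^{(1)}$.

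The one genuine weak point is the form in which you quote the imported result, which is stronger than what [Ka, Proposition 3.12a] actually asserts and is false as a general principle. Kac's proposition concerns a point $\lambda$ in the closure of the fundamental chamber (equivalently, in the Tits cone) and says its stabilizer is generated by the fundamental reflections fixing it. A real root of an indefinite-type algebra is \emph{not} in the Tits cone --- the paper itself records that for hyperbolic type $\overline{X}\cup-\overline{X}$ consists of the vectors of non-positive norm, while real roots have positive norm --- so 3.12a does not apply verbatim to $\alpha_1$, $\alpha_3$, or $\alpha_1+2\alpha_2+2\alpha_3$. Moreover, the paraphrase ``$\operatorname{Stab}_W(\alpha_i)=\langle w_j: a_{ij}=0\rangle$'' fails in general: already in $B_2$ the reflection $s_{\alpha_1+\alpha_2}$ fixes the short simple root $\alpha_1$ although no off-diagonal entry of its row vanishes (the paper notes exactly this phenomenon in the subsection on reflection geometry), so a stabilizer may contain reflections in non-simple orthogonal roots, and outside the Tits cone it is not automatic that it is reflection-generated at all. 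Hence the reverse inclusions still require an argument specific to $\widehat{A}_1^{(1)}$ --- for instance identifying the real roots lying in $\alpha_1^{\perp}$ inside the root lattice, or the sign-tracking argument via Lemma~\ref{posroot} that you only sketch. That said, this is precisely the gap already present in the paper's one-line citation, so your proposal matches the paper's level of justification on (1)--(2) and improves on it in the explicit verification of (3).
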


We may use the example of the Feingold--Frenkel rank 3 hyperbolic root system and its graph $\mathcal{P}(\widehat{A}_1^{(1)})$ to indicate the role of fixed points.  If $W$ acted simply transitively on simple roots, the graph $\mathcal{P}(\widehat{A}_1^{(1)})$ would be a single connected component and would coincide with the Cayley graph, dual to the tessellation. Note that this is the case for $\mathcal{P}(H_2^{(3)})$ (Figure ~\ref{fig3}), which coincides with the Cayley graph for the triangle group $(\infty,3,3)$, dual to the tessellation.  However, for $\widehat{A}_1^{(1)}$ there are 2 orbits for $W$ on real roots, hence 2 connected components of the graph $\mathcal{P}(\widehat{A}_1^{(1)})$. If there were no fixed points for $W$ on real roots, these 2 components would both be dual to the tessellation. However, each connected component admits distinct fixed points for the action of $W$ on simple roots. The connected components of $\mathcal{P}(\widehat{A}_1^{(1)})$ are thus orbifold graphs.  For transitive Weyl group actions, the graph $\mathcal{P}$ is dual to the $W$-tessellation of its root space.

\section{Weyl group orbits on imaginary roots}

In this section we examine the structure of the orbits of the Weyl group on imaginary roots.  We use the symmetry properties of the lightcone in the Cartan subalgebra of a hyperbolic Kac--Moody  algebra to deduce some natural properties of the structure of Weyl group orbits on imaginary roots.

Let $\mathfrak{g}$ be a  Kac--Moody  algebra with Cartan subalgebra $\mathfrak{h}$, root space decomposition
$$\mathfrak{g}=\mathfrak{g}^+\ \oplus\ \frak{h}\ \oplus\ \mathfrak{g}^-,$$  
$$\mathfrak{g}^+ =\bigoplus_{\alpha\in\Delta^+}\mathfrak{g}^{\alpha},\ \ 
\mathfrak{g}^- = \bigoplus_{\alpha\in\Delta^-}\mathfrak{g}^{\alpha}$$
and root spaces 
$$\mathfrak{g}^{\alpha}\ =\ \{x\in \mathfrak{g}\mid[h,x]=\alpha(h)x,\ h\in \frak{h}\}.$$
The dimension of the root space $\mathfrak{g}^{\alpha}$ is called the {\it multiplicity} of $\alpha$. We recall that the associated Weyl group acts on the set $\Delta$ of all roots, preserving root multiplicities. 

Now let $\mathfrak{h}$ be the standard Cartan subalgebra of  a symmetrizable hyperbolic Kac--Moody  algebra $\mathfrak{g}$ and let  $\mathfrak{h}_{\mathbb{R}}\subset\mathfrak{h}$ such that 
$\mathfrak{h}=\mathbb{C}\otimes_{\mathbb{R}}\mathfrak{h}_{\mathbb{R}}$ and $\mathfrak{h}_{\mathbb{R}}$ contains the simple roots of $\mathfrak{h}$.
The union $X$ of the sets $w(\mathcal{C})$, for $w\in W$ 
$$X=\bigcup_{w\in W} w(\mathcal{C})$$
is called the {\it Tits cone}. The Weyl group $W$ has a fundamental chamber $\mathcal{C}$ inside $\mathfrak{h}_{\mathbb{R}}$ for its action on $X$. 

When $\mathfrak{g}$ is of hyperbolic type, we have the following description of the union of the closures of the positive and negative Tits cones
$$\overline{X}\cup -\overline{X}=\{h\in \mathfrak{h}_{\mathbb{R}}\mid (h\mid h)\leq 0\},$$
where $(\cdot\mid\cdot)$ is the symmetric bilinear invariant form on $\mathfrak{g}$. The set 
$$\mathcal{L}_{\mathfrak{h}_{\mathbb{R}}}=\{h\in \mathfrak{h}_{\mathbb{R}}\mid (h\mid h)\leq 0\}$$ is called the {\it lightcone} of the Cartan subalgebra $\mathfrak{h}_{\mathbb{R}}$.

As the Adjoint action (and $W$) act by isometries, the length of roots is preserved. As a consequence the action of $W$ preserves the set of imaginary roots of zero squared length and of negative squared length. 

The imaginary roots of negative squared length lie on hyperboloids of a fixed `radius' inside the lightcone while the imaginary roots of zero squared length lie on the boundary of the lightcone. Moreover, for $\mathfrak{g}$ hyperbolic, every lattice point inside the lightcone of $\mathfrak{h}$ corresponds to an imaginary root ([Ka], Ch 5).

If $\alpha$ is a (positive) imaginary root, then by Lemma~\ref{posroot}, the image $W\alpha$  is also positive. Since $W$ preserves $(\alpha\mid\alpha)$ ([Ka], Ch 3), $W\alpha$ lies along the hyperboloid  of radius $(\alpha\mid\alpha)$. We may view the image $W\alpha$ as a  `translation' of $\alpha$ along this hyperboloid.

Thus we have the following obvious but useful characterization of real and imaginary roots in terms of the Weyl group orbits.

Let $\Mg$ be a symmetrizable Lie algebra or Kac--Moody  algebra. Let $\alpha$ be any root. Then
$$\alpha\text{ is real if and only if there exists $w\in W$ such that } w\alpha=-\alpha,$$
$$\alpha\text{ is imaginary if and only if there does not exist $w\in W$ such that } w\alpha=-\alpha.$$

Focusing the attention to sets of roots that lie on the same hyperboloid of fixed squared length, it is possible that these sets contain roots of different multiplicities.  If $\alpha,\beta\in\Delta^{imag}$ are on the same hyperboloid of fixed squared length but $mult(\alpha)\neq mult(\beta)$, then there is no element of $Aut(\mathfrak{g})$ that takes $\alpha$ to $\beta$. In particular there is no element of $W$ that takes $\alpha$ to $\beta$. Hence roots with different multiplicity must lie in different $W$--orbits.

%Let us look at some examples of roots of nonzero equal squared length with different multiplicities. For example in the rank $2$ hyperbolic case

%\begin{itemize}
%\item $\alpha=11\alpha_1+11\alpha_2$ has $(\alpha|\alpha)=-242$ but multiplicity $6559$ 
%\item $\beta=10\alpha_1+13\alpha_2$ has $(\alpha|\alpha)=-242$ but multiplicity $6555$
%\end{itemize}
%\medskip\noindent therefore $\alpha$ and $\beta$ cannot be in the same $W$--orbit. 

%\attn{ One question is to determine the number of orbits of $W$ on roots of zero squared length as well as the set of fixed roots of zero squared length. Can we determine the multiplicities of the null roots?  How many orbits? Are there fixed roots?}

We make the following self--evident but important observation. Let $\mathfrak{g}$ be a hyperbolic Kac--Moody  algebra. Let $\alpha$ and $\beta$ be any pair of imaginary roots of the same squared length that lie inside the fundamental chamber $\mathcal{C}$ for the Weyl group or on its boundary $\partial\mathcal{C}$. Then $\alpha$ and $\beta$ may have the same multiplicity, but lie in distinct orbits under the action of $W$.

It follows that imaginary roots are in the same $W$--orbit, then they must  have the same squared length and the same multiplicity but can't both be in the fundamental chamber for the Weyl group or on its boundary. The following is an easy consequence of the properties of $W$--orbits on the imaginary roots and can be deduced easily from [K] and [M3]. 

\begin{theorem} Let $\mathfrak{g}$ be a Kac--Moody  algebra of hyperbolic type.

(1) For imaginary roots of fixed squared length on a hyperboloid $\mathcal{H}$, there are finitely many $W$--orbits of imaginary roots. 

(2) The cardinality of each such orbit is infinite. 

(3) The number of $W$--orbits on imaginary roots of fixed squared length is bounded above by the number of lattice points in $(\mathcal{C}\cup\partial\mathcal{C})\cap \mathcal{H}$.

(4) The number of $W$--orbits on imaginary roots $\to\infty$ as squared length $\to\infty$.
\end{theorem}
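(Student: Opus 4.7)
The plan is to exploit the fact, standard for symmetrizable hyperbolic $\mathfrak{g}$, that every positive imaginary root lies in the closure of the positive Tits cone $X=\bigcup_{w\in W}w(\mathcal{C})$, and that $\mathcal{C}\cup\partial\mathcal{C}$ is a strict fundamental domain for $W$ acting on $X$. Thus every $W$--orbit of positive imaginary roots meets $\mathcal{C}\cup\partial\mathcal{C}$, and meets the open chamber $\mathcal{C}$ in at most one point; negative imaginary roots are handled symmetrically via $\alpha\mapsto-\alpha$. This reduces all four claims to statements about lattice points in $(\mathcal{C}\cup\partial\mathcal{C})\cap\mathcal{H}$.

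For parts (1) and (3) I fix a hyperboloid $\mathcal{H}=\{h\in\mathfrak{h}_{\mathbb{R}}:(h\mid h)=r\}$ with $r<0$. Since $\mathcal{C}\cup\partial\mathcal{C}$ is a fundamental domain, both claims reduce to showing that only finitely many lattice points of $Q$ lie in $(\mathcal{C}\cup\partial\mathcal{C})\cap\mathcal{H}$. For strictly hyperbolic $\mathfrak{g}$ this set is compact, so the statement is immediate from discreteness of $Q$. For noncompact hyperbolic $\mathfrak{g}$ the closed fundamental chamber extends to infinity along null cusp directions, but a horoball argument at each cusp shows that at any fixed strictly negative squared length only finitely many lattice points can accumulate. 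The resulting count is precisely the bound asserted in (3), and (1) is the weaker finiteness statement.

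For part (2), I take an imaginary root $\alpha$ with $(\alpha\mid\alpha)\leq 0$. If $(\alpha\mid\alpha)<0$, then $\alpha$ lies in the interior of the lightcone, on which $W$ acts properly discontinuously as a discrete subgroup of $O(\ell-1,1)$; the stabilizer $W_\alpha$ is therefore finite, and since $W$ itself is infinite in the hyperbolic case, $|W\alpha|=[W:W_\alpha]=\infty$. If $(\alpha\mid\alpha)=0$, then $\alpha$ is a null imaginary root on the boundary of the lightcone, and $W_\alpha$ can be an infinite parabolic subgroup fixing a cusp; the needed fact is that it still has infinite index in $W$, which follows from the hyperbolic (rather than purely affine) nature of $\mathfrak{g}$. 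This null case is the main obstacle and will need the most care.

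Finally, for part (4) I want a lower bound on the number of orbits on $\mathcal{H}$ as $|r|\to\infty$. Any two distinct lattice points in the open chamber $\mathcal{C}\cap\mathcal{H}$ lie in distinct $W$--orbits (strict fundamental domain property), and for hyperbolic $\mathfrak{g}$ nonzero lattice points interior to the lightcone are known to be imaginary roots ([K], Ch.~5). As $|r|\to\infty$ the set $\mathcal{C}\cap\mathcal{H}$ scales in Euclidean diameter like $\sqrt{|r|}$, so a Minkowski--type lattice count forces the number of lattice points in it, and hence the number of orbits, to tend to infinity. The quantitative growth rate in (4) is the only place where a genuine estimate, rather than a formal reduction to the fundamental domain, is required.
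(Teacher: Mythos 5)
Your overall strategy is the same as the paper's: reduce everything to the fact that $\overline{\mathcal{C}}$ is a strict fundamental domain for $W$ on the Tits cone, so that orbits of imaginary roots of fixed squared length correspond to lattice points of that norm in $(\mathcal{C}\cup\partial\mathcal{C})\cap\mathcal{H}$. For (1)--(3) you are, if anything, more careful than the paper, which simply asserts that the set $\mathcal{S}$ of such lattice points is finite and that $W\mathcal{S}$ is a union of finitely many infinite sets. Your finiteness sketch is sound and can be closed cleanly: at a cusp of $\overline{\mathcal{C}}$ there is a null root $\delta$, and for a lattice point $v$ with $(v\mid v)=r<0$ the pairing $(v\mid\delta)$ is a nonzero integer, hence bounded away from $0$; on the fixed hyperboloid this keeps $v$ out of a horoball neighbourhood of the cusp, and the complement of the cusp neighbourhoods in $\overline{\mathcal{C}}\cap\mathcal{H}$ is compact. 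Your argument for (2) in the case $(\alpha\mid\alpha)<0$ (finite point stabilizers from proper discontinuity, $W$ infinite) is correct and is more than the paper offers. The null case of (2) that you flag is genuinely unresolved in your write-up, although it is true (the stabilizer of $\delta$ is the affine subgroup, and any loxodromic element of $W$ not fixing that cusp moves $\delta$ to infinitely many distinct points); note also that if $\mathcal{H}$ is taken to have strictly negative squared length, as the paper's phrasing suggests, this case does not arise.

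The genuine gap is in (4). You argue that since $\mathcal{C}\cap\mathcal{H}$ has Euclidean diameter of order $\sqrt{|r|}$, a ``Minkowski--type lattice count'' forces the number of lattice points on it to tend to infinity. That inference is not valid: the lattice points you need lie \emph{exactly on} a codimension-one quadric, and their number is an arithmetic quantity (a representation number of the quadratic form in the chamber), not something controlled by the Euclidean size of the slice. For comparison, the sphere $x^2+y^2+z^2=n$ in $\mathbb{Z}^3$ has arbitrarily large radius and \emph{no} lattice points whenever $n\equiv 7 \pmod 8$; and in the rank $3$ Lorentzian case the statement you need is essentially the growth of class numbers of binary quadratic forms of determinant $|r|/2$, which is a deep theorem (Siegel/Heilbronn), far beyond a volume count. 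So as written your proof of (4) would not survive scrutiny, and some arithmetic or case-specific input (e.g.\ in the noncompact case, exhibiting $d(m)$ pairwise inequivalent antidominant imaginary roots $a\delta_1+b\delta_2$ with $ab=m$ of equal norm) is needed. To be fair, the paper itself disposes of (4) with ``follows easily'' and gives no argument either, but your proposal commits to a specific argument that fails, so this step must be replaced rather than merely expanded.
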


{\it Proof:} Let $\mathcal{H}$  be a  hyperboloid supporting the imaginary roots of some fixed squared length.  Let $\mathcal{S}$ be the set of lattice points  in $(\mathcal{C}\cup\partial\mathcal{C})\cap \mathcal{H}$. Then $|\mathcal{S}|<\infty$. The imaginary roots corresponding to lattice points in $\mathcal{S}$ cannot be in the same $W$--orbit. The roots in $\mathcal{S}$ may or may not have the same multiplicity. If they have the same multiplicity, they may be in the same $W$--orbit. If the roots in $\mathcal{S}$ do not all have the same multiplicity, then they will fall into finitely many $W$--orbits. Thus $W\mathcal{S}$ is a union of finitely many infinite disjoint sets. This proves (1) and (2) and (3) and (4) follow easily. $\square$

\end{document}